\DeclareMathAlphabet{\cat}{OT1}{cmss}{m}{sl}
\newtheorem*{theorem*}{Theorem}
\newtheorem{theorem}{Theorem}[section]
\newtheorem{proposition}[theorem]{Proposition}
\newtheorem{corollary}[theorem]{Corollary}
\theoremstyle{definition}
\newtheorem*{construction*}{Construction}
\newtheorem{example}[theorem]{Example}
\newcommand{\tens}{\otimes}
\newcommand{\gmu}{\boldsymbol{\mu}}
\newcommand{\ind}{\operatorname{\hspace{0.3mm}ind}}
\newcommand{\disc}{\operatorname{disc}}
\newcommand{\Spec}{\operatorname{Spec}}
\newcommand{\GL}{\operatorname{GL}}
\newcommand{\gSp}{\operatorname{\mathbf{Sp}}}
\newcommand{\gGSpin}{\operatorname{\mathbf{GSpin}}}
\newcommand{\gGSp}{\operatorname{\mathbf{GSp}}}
\newcommand{\gSL}{\operatorname{\mathbf{SL}}}
\newcommand{\gO}{\operatorname{\mathbf{O}}}
\newcommand{\gE}{\operatorname{\mathbf{E}}}
\newcommand{\gGamma}{\operatorname{\mathbf{\Gamma}}}
\newcommand{\gOmega}{\operatorname{\mathbf{\Omega}}}
\newcommand{\gGL}{\operatorname{\mathbf{GL}}}
\newcommand{\gm}{\operatorname{\mathbb{G}}_m}
\newcommand{\gSpin}{\operatorname{\mathbf{Spin}}}
\newcommand{\PGL}{\operatorname{PGL}}
\newcommand{\ed}{\operatorname{ed}}
\newcommand{\rank}{\operatorname{rank}}
\newcommand{\Rep}{\operatorname{Rep}}
\newcommand{\red}{\operatorname{red}}
\renewcommand{\P}{\mathbb{P}}
\newcommand{\Z}{\mathbb{Z}}
\title[Essential dimension of reductive groups]{Essential dimension of reductive groups via generically free representations}
\author
[S.~Baek, Y.~Kim] {Sanghoon Baek, Yeongjong Kim}
\address[Sanghoon Baek]{Department of Mathematical Sciences, 
	KAIST,
	291 Daehak-ro, Yuseong-gu,
	Daejeon 34141,
	Republic of Korea}
\email{sanghoonbaek@kaist.ac.kr}
\urladdr{http://mathsci.kaist.ac.kr/~sbaek/}
\address[Yeongjong Kim]{Department of Mathematical Sciences, 
	KAIST,
	291 Daehak-ro, Yuseong-gu,
	Daejeon 34141,
	Republic of Korea}
\email{kimyj@kaist.ac.kr}
\begin{document}

\maketitle

\begin{abstract}
We provide a simple method to compute upper bounds on the essential dimension of split reductive groups with finite or connected center by means of their generically free representations. Combining our upper bound with previously known lower bound, the exact value of the essential dimension is calculated for some types of reductive groups. As an application, we determine the essential dimension of a semisimple group of classical type or $E_{6}$, and its strict reductive envelope under certain conditions on its center. This extends previous works on simple simply connected groups of type $B$ or $D$ by Brosnan-Reichstein-Vistoli and Chernousov-Merkurjev, strict reductive envelopes of groups of type $A$ by Cernele-Reichstein, and semisimple groups of type $B$ by the authors to any classical type and type $E_{6}$ in a uniform way.
\end{abstract}


\section{Introduction}

Let $G$ be an algebraic group over a field $k$. For a field extension $K/k$, the set of isomorphism classes of $G$-torsors $X\to \Spec(K)$ is bijective to the first non-abelian cohomology set $H^{1}(K,G)$ in the fppf topology. A $G$-torsor $\alpha: Y\to Z$ is called \emph{versal} if every $G$-torsor $X\to \Spec(K)$ with $K$ infinite is isomorphic to the pull-back of $\alpha$ with respect to a point $\Spec(K)\to Z$ and such points forms a dense subset of $Z$. The \emph{essential dimension} $\ed(G)$ of $G$ is defined by the smallest dimension $\dim Z$ of a versal $G$-torsor $Y\to Z$ (see \cite{Reichstein}, \cite{Merkurjev1} for the equivalent definition).

Let $G$ act on a geometrically irreducible variety $X$ over $k$. A subgroup $S$ of $G$ is called a \emph{generic stabilizer} (or \emph{stabilizer in general position}) of $X$ if there is a dense open subset $U$ of $X$ such that the scheme-theoretic stabilizer of every $\bar{k}$-point in $U$ is conjugate to $S$, where $\bar{k}$ denotes the algebraic closure of $k$. We say that a $G$-action $X$ is \emph{generically free} (or \emph{locally free}) if the trivial subgroup of $G$ is a generic stabilizer for this action. Especially, a $G$-representation $V$ is called generically free if $V$ is generically free
as a variety. Indeed, every generically free representation $V$ gives a versal $G$-torsor $U\to Z$ with a non-empty $G$-invariant open subset $U\subset V$, which in turn yields a simple upper bound of $\ed(G)$:
\begin{equation}\label{lem:upperbounds}
\ed(G)\leq \dim V-\dim G.
\end{equation}

For instance, if $G$ is a simple adjoint group, then the adjoint representation of $G$ on the direct sum of two copies of its Lie algebra is generically free, thus $\ed(G)\leq \dim G$. A minimal generically free representation often determines the essential dimension of $G$ including a finite $p$-group \cite{KM} and a split simply connected group $\gSpin(n)$ of type $B$ and $D$ \cite{BRV}, which are remarkable results of this subject.

Let $G$ be a split semisimple group over $k$ and let $G_{\red}$ be a \emph{strict reductive envelope} of $G$, i.e. $G_{\red}$ is a split reductive group such that $G$ is its semisimple part and the center $Z(G_{\red})$ is a torus. Then, $Z(G)=Z(G_{\red})\cap G$ and the embedding of $Z(G)$ into $Z(G_{\red})$ yields an isomorphism between $G_{\red}$ and the cofiber product of $G$ and $Z(G_{\red})$ over $Z(G)$. Indeed, a strict reductive envelope of $G$ arises from the following construction: take an embedding $\phi:Z(G)\to T$ into a split torus $T$ and the quotient group $(G\times T)/Z(G)$,  where $Z(G)$ is embedded into $G\times T$ via $a\mapsto (a, \phi(a)^{-1})$. Then, the quotient become a strict reductive envelope of $G$ whose center is isomorphic to $T$.

For instance, the general linear group $\gGL(n)$ and the even Clifford group $\gGamma^{+}(n)$ are strict reductive envelopes of $\gSpin(n)$ with odd $n$ and $\gSL(n)$, respectively. Though two strict reductive envelopes of $G$ may not be isomorphic, their corresponding classes of torsors are the same (Proposition \ref{prop:twosre}). Therefore, the essential dimension of $G_{\red}$ is independent of the choice of strict reductive envelopes.

In the present paper, we provide a method to compute upper bounds on the essential dimension of an arbitrary semisimple group and its strict reductive envelope by constructing a generically free representation (Theorem \ref{thm:main1}). More precisely, we compute an upper bound of the essential dimension of a semisimple group $G$ and its reductive envelope $G_{\red}$ with $m$ simple components of the form 
\begin{equation}\label{intro:gform}
G=(\tilde{G}_{1}\times \cdots \times \tilde{G}_{m})/\mu
\end{equation}
where each $\tilde{G}_{i}$ is a simple simply connected group and $\mu$ is a central subgroup of $\tilde{G}_{1}\times \cdots \times \tilde{G}_{m}$ with no direct factors of the center of $\tilde{G}_{i}$ for all $1\leq i\leq m$ (i.e., $G$ has no direct factor of an adjoint simple group). For brevity, such a semisimple group is called \emph{reduced}. Indeed, if $G$ has a simple direct factor of a central isogeny $G_{i}'$ of $\tilde{G}_{i}$, i.e., $G=G_{i}'\times G'$ for some semisimple group $G'$, then by \cite[Lemma 1.11]{BerhuyFavi} the sum of the essential dimensions $\ed(G_{i}')$ and $\ed(G')$ gives an upper bound of $\ed(G)$, thus it is essential to compute an upper bound of a reduced semisimple group $G$ in (\ref{intro:gform}). In many cases, our upper bound in Theorem (\ref{thm:main1}) improves the known general upper bound in (\ref{generalbounds}) (see \cite[Proposition 2.1]{CM}, \cite[Example 3.4]{Lotscher}, and \cite[Theorem 4.4, Remark 4.5]{KM}).

On the other hand, there exists a method, as in (\ref{generalbounds}), to compute a lower bound for $\ed(G)$ by considering a central extension of $G$ (see \cite[Theorem 4.1]{Reichstein}, \cite[Theorem 6.2]{Merkurjev1}). In Corollary \ref{cor:cor1} we give a sufficient condition that the upper bound obtained from Theorem \ref{thm:main1} match this lower bound. In Corollary \ref{cor2}, we also provide a complementary method of computing the essential dimension $\ed(G)$, generalizing the proof \cite[Theorem 2.2]{CM} for the case where $G=\gSpin(n)$ with $4\,|\,n$, $n\geq 20$.

Applying our methods to split semisimple groups of types $A$, $B$, $C$, $D$, and $E_{6}$, we obtain new upper bounds and exact values for the essential dimension of the semisimple groups (Propositions \ref{prop:typeBD}, \ref{prop:typeC}, \ref{prop:typeA}, \ref{prop:typeE}) extending previous results on simple groups $\gSpin(n)$ \cite{BRV}, \cite{CR}, strict reductive envelopes of groups of type $A$ \cite{CR}, and semisimple groups of type $B$ \cite{edtypeB}.

In the proofs, we make use of a classification of generically free representations of semisimple groups \cite{Ela1, Ela2, Popov1, Popov2}. Due to lack of generically free representations, the application to the groups of type $A$ is more restrictive than other classical types. However, our approach gives a uniform way to compute the essential dimension of semisimple groups of classical type.

Although we do not explicitly write in the statements, note that in all cases where we compute the exact value of $\ed(G)$ (Corollaries \ref{cor:cor1}, \ref{cor2} and Propositions \ref{prop:typeBD}, \ref{prop:typeC}, \ref{prop:typeA}, \ref{prop:typeE}), $\ed(G)=\ed_p(G)$, where $\ed_{p}(G)$ denotes the essential $p$-dimension of $G$ for a torsion prime of $G$.

In this paper, we write $\gmu_{n}$ and $\gm$ for the group of $n$th roots of unity and the multiplicative group, respectively. For an algebraic group $G$ we denote by $Z(G)$ and $G^{*}$ the center of $G$ and the character group of $G$, respectively. For a positive integer $m$, we denote by $[m]$ the set $\{1,...,m\}$. For a finitely generated group $H$, we denote by $\rank H$ the minimal cardinality of a generating set of $H$. Finally, we denote by $|B|$ the cardinality of a set $B$.

\medskip

\paragraph{\bf Acknowledgements.} 
The authors are grateful to the anonymous reviewers for their valuable comments and suggestions to improve the manuscript. This work was supported by Samsung Science and Technology Foundation under Project Number SSTF-BA1901-02.

\section{Essential dimension of split reductive groups}

In this section, we present a new method (Theorem \ref{thm:main1}) for obtaining upper bounds of the essential dimension of a semisimple group and its strict reductive envelope. In Corollaries \ref{cor:cor1} and \ref{cor2}, we give sufficient conditions for determining the essential dimension based on the upper bounds in the main theorem.

We first show that the essential dimension is independent of the choice of a strict reductive envelope. This is a generalization of \cite[Corollaries A.2, A.3]{CR}. See also \cite[Theorem A.1]{CR} and \cite[Lemmas 6.1, 6.6, 6.11]{Baek} for an explicit description of the classes of torsors of a strict reductive envelope.

\begin{proposition}\label{prop:twosre}
Let $G$ be a split semisimple group over a field $k$ and let $G_{1}$ and $G_{2}$ be two strict reductive envelopes of $G$. Then, $H^{1}(K,G_{1})=H^{1}(K,G_{2})$ for any field extension $K/k$. In particular, $\ed(G_{1})=\ed(G_{2})$.
\end{proposition}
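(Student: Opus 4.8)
The plan is to exploit the explicit construction of strict reductive envelopes given in the introduction. Recall that any strict reductive envelope arises as $(G\times T)/Z(G)$ for an embedding $\phi\colon Z(G)\hookrightarrow T$ into a split torus $T$, with $Z(G)$ sitting in $G\times T$ via $a\mapsto(a,\phi(a)^{-1})$. So write $G_1=(G\times T_1)/Z(G)$ and $G_2=(G\times T_2)/Z(G)$ for split tori $T_i$ and embeddings $\phi_i\colon Z(G)\hookrightarrow T_i$. First I would reduce to comparing each $G_i$ with a common third envelope, so it suffices to treat the case where $T_1$ is (up to isomorphism) a subtorus of $T_2$ compatibly with the $\phi_i$; more precisely, since any two embeddings of the finite group $Z(G)$ into split tori can both be enlarged to an embedding into a big enough split torus, I can assume there is an embedding $\iota\colon T_1\hookrightarrow T_2$ of split tori with $\iota\circ\phi_1=\phi_2$. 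Then there is a natural surjection $G_1=(G\times T_1)/Z(G)\to(G\times T_2)/Z(G)=G_2$? No — rather an injection, and in fact $G_2\cong(G_1\times T_2)/T_1$ where $T_1$ is embedded antidiagonally; so it is enough to prove the statement for the single step of passing from $G_1$ to $(G_1\times S)/\gm^r$-type quotients. Cleanest is: it suffices to show $H^1(K,G_{\red})=H^1(K,G)$-image-type statement, i.e. that for $1\to T\to G_{\red}\to \bar G\to 1$ type sequences the torsor classes of $G_{\red}$ are controlled by $G$.

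The core computation is then the following. For $G_{\red}=(G\times T)/Z(G)$, consider the exact sequence of algebraic groups
\begin{equation*}
1\to Z(G)\to G\times T\to G_{\red}\to 1,
\end{equation*}
and also the sequence $1\to T\to G_{\red}\to G/Z(G)\to 1$ together with $1\to G\to G_{\red}\to T/\phi(Z(G))\to 1$. Taking fppf cohomology over a field extension $K/k$, the key point is that $H^1(K,T)=0$ (Hilbert 90 for split tori) and $H^1(K,G\times T)=H^1(K,G)\times H^1(K,T)=H^1(K,G)$. From the sequence $1\to G\to G_{\red}\to T/\phi(Z(G))\to 1$ and the fact that $T/\phi(Z(G))$ is again a split torus (a quotient of a split torus by a finite subgroup is a split torus, since its character lattice is a finite-index sublattice of that of $T$), one gets $H^1(K,T/\phi(Z(G)))=0$, hence the map $H^1(K,G)\to H^1(K,G_{\red})$ is surjective. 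For injectivity one uses that the connecting map $(T/\phi(Z(G)))(K)\to H^1(K,G)$ factors through... actually the cleaner route: the map $H^1(K,G)\to H^1(K,G_{\red})$ has fibers that are orbits of $(T/\phi(Z(G)))(K)$ acting via the coboundary, and one shows this action is trivial, or alternatively one compares directly with a second envelope $G'$ and shows both $H^1(K,G)\to H^1(K,G_{\red})$ and $H^1(K,G)\to H^1(K,G')$ are bijections onto their images which coincide. I would check that the image of $H^1(K,G)\to H^1(K,G_{\red})$ is all of $H^1(K,G_{\red})$ and that the composite $H^1(K,G_{\red})\to H^1(K,T/\phi(Z(G)))=0$ forces the preimages to match up, so that in fact $H^1(K,G_{\red})$ is a quotient of $H^1(K,G)$ that is the same for every envelope.

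Here is the way I would actually organize it to get equality on the nose of $H^1(K,G_1)$ and $H^1(K,G_2)$, not just ``same quotient of $H^1(K,G)$''. Embed both $T_1,T_2$ into a common split torus $T$ with $\phi_1,\phi_2$ both factoring through a fixed $\phi\colon Z(G)\hookrightarrow T$; set $G_3=(G\times T)/Z(G)$. It then suffices to prove $H^1(K,G_i)=H^1(K,G_3)$ for $i=1,2$, i.e. to handle the case of one envelope mapping into a larger one. For that, write $G_3=(G_1\times (T/T_1))/(1)$ — more carefully, $G_3\cong (G_1\times T')$-type extension where $T'=T/\iota(T_1)$ is split: one has $1\to G_1\to G_3\to T'\to 1$ with $T'$ a split torus, giving surjectivity of $H^1(K,G_1)\to H^1(K,G_3)$ from $H^1(K,T')=0$, and one also has $1\to T'{}^\vee\text{-dual thing}$... the clean injectivity statement: $G_1\to G_3$ admits a retraction after base change? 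Not quite, but the coboundary $H^0(K,T')\to H^1(K,G_1)$ is trivial because $T'$ is a split torus so $T'(K)$ is generated by $K$-points that lift to $G_3(K)$ — indeed $G_3\to T'$ has a section over $k$ as schemes? It has a section as varieties since it is a $G_1$-torsor over the rational variety $T'$... I will instead invoke that $G_1\to G_3$ is the quotient by an action making $G_3$ a $T'$-torsor over... The honest hard point, and the main obstacle, is establishing \emph{injectivity} of $H^1(K,G_1)\hookrightarrow H^1(K,G_3)$, equivalently showing the coboundary map from $T'(K)$ into $H^1(K,G_1)$ is trivial; I expect this to follow from the fact that the extension $1\to G_1\to G_3\to T'\to 1$ is split (there is a homomorphic section $T'\to G_3$ because $T'$ is a split torus and the pushout construction splits it — one can build the section from a splitting $T\cong T_1\times T'$ of lattices), which immediately kills the coboundary and gives the bijection. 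Granting Hilbert 90 for split tori, splitness of quotients of split tori by finite subgroups, and this splitting-of-the-extension observation, the statement $H^1(K,G_1)=H^1(K,G_2)$ follows, and then $\ed(G_1)=\ed(G_2)$ is immediate from the definition of essential dimension since it only depends on the functor $K\mapsto H^1(K,-)$.
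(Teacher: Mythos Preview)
Your strategy is sound and genuinely different from the paper's. The paper argues \emph{downward} via the adjoint quotient: from $1\to Z(G_i)\to G_i\to \bar G\to 1$ and Hilbert~90 for the torus $Z(G_i)$, it identifies $H^1(K,G_i)$ with the kernel of the composite $H^1(K,\bar G)\xrightarrow{\partial} H^2(K,Z(G))\xrightarrow{(\varphi_i)_*} H^2(K,Z(G_i))$, and then shows the two kernels coincide by lifting the surjections $Z(G_i)^*\twoheadrightarrow Z(G)^*$ through one another, which is possible because each $Z(G_i)^*$ is free abelian. You instead argue \emph{upward}, enlarging both envelopes to a common third one $G_3$. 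Your route avoids $H^2$ and the nonabelian long-exact-sequence bookkeeping, at the price of constructing the common torus; the paper's route is shorter and works purely at the level of character lattices of the centers.

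Two points in your write-up need tightening. First, the common $T$ with $T_i\hookrightarrow T$ and $\iota_1\circ\phi_1=\iota_2\circ\phi_2$: dualize and set $T^*:=T_1^*\times_{Z(G)^*}T_2^*$, which is free (a subgroup of $T_1^*\times T_2^*$) with both projections $T^*\to T_i^*$ surjective since each $T_j^*\to Z(G)^*$ is. Second, and more substantive: in nonabelian cohomology a homomorphic section of $1\to G_1\to G_3\to T'\to 1$ only trivializes the fibre of $H^1(K,G_1)\to H^1(K,G_3)$ over the base point; for other fibres one must twist, and the twisted extension need not split. What actually makes your argument work is that the section $T'\hookrightarrow G_3$ built from the splitting $T\cong T_1\times T'$ lands in $Z(G_3)\cong T$, so in fact $G_3\cong G_1\times T'$ is a \emph{direct} product. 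Then $H^1(K,G_3)=H^1(K,G_1)\times H^1(K,T')=H^1(K,G_1)$ immediately, and no coboundary analysis is needed. Stated this way, your proof is clean and complete.
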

\begin{proof}
Consider the following commutative diagram with exact sequences
\begin{equation}\label{eq:twosre}
\begin{tikzcd}
1 \arrow[r] & Z(G) \arrow[r] \arrow[d, hook, "\varphi_{i}"] & G \arrow[r] \arrow[d, hook] & \bar{G} \arrow[r] \arrow[d, equal] & 1\\
1 \arrow[r] & Z(G_{i}) \arrow[r] & G_{i} \arrow[r] & \bar{G} \arrow[r] & 1,
\end{tikzcd}
\end{equation}		
where $\varphi_{i}$ denotes the embedding for $i=1, 2$ and $\bar{G}$ denotes the adjoint group. Since $Z(G_{i})$ is a torus, by Hilbert Theorem $90$ the diagram (\ref{eq:twosre}) induces the following commutative diagram with exact rows:
\begin{equation*}
\begin{tikzcd}
&& H^1(K,\bar{G})\arrow[r, "\partial"] \arrow[d, equal] & H^2(K, Z(G)) \arrow[d, "(\varphi_{i})_{*}"]\\
1 \arrow[r] & H^1(K, G_{i}) \arrow[r] & H^1(K, \bar{G}) \arrow[r] & H^2(K, Z(G_{i}))
\end{tikzcd}
\end{equation*}
for any field extension $K/k$. Hence, by \cite[Proposition 42]{Serre} the set $H^{1}(K,G_{i})$ is bijective to the kernel of the composition $(\varphi_{i})_{*}\circ \partial$.

Let $(\varphi_{i})^{*}: Z(G_{i})^{*}\to Z(G)^{*}$ be the surjective map dual to $\varphi_{i}$. As the character group $Z(G_{i})^{*}$ is free abelian, there exist $(\psi_{1})^{*}:Z(G_{1})^{*}\to Z(G_{2})^{*}$ and $(\psi_{2})^{*}:Z(G_{2})^{*}\to Z(G_{1})^{*}$ such that $(\varphi_{1})^{*}=(\varphi_{2})^{*}\circ (\psi_{1})^{*}$ and $(\varphi_{2})^{*}=(\varphi_{1})^{*}\circ (\psi_{2})^{*}$, i.e., $\varphi_{1}=\psi_{1}\circ \varphi_{2}$ and $\varphi_{2}=\psi_{2}\circ \varphi_{1}$. Therefore, we have $\ker\big((\varphi_{1})_{*}\circ \partial\big)=\ker\big((\varphi_{2})_{*}\circ \partial\big)$.\end{proof}

For an algebraic group $G$ and a character $\chi:C\to \gm$ of a central subgroup $C$, we denote by $\operatorname{Rep}^{\chi}(G)$ the category of all finite dimensional representation $V$ of $G$ such that $cv=\chi(c)v$ for any $c\in C$ and $v\in V$. We write $n_{G}(\chi)$ for $\gcd_{V\in \operatorname{Rep}^{\chi}(G)} \dim V$. When there is no confusion, we simply write it as $n(\chi)$. For a simple group $G$ and its center $C=Z(G)$, the value of $n_{G}(\chi)$ is given in \cite[Section 4]{Merkurjev2}. For a semisimple group $G$ in (\ref{intro:gform}), a character $\chi:Z(G)\to \gm$ is denoted by $\chi=(\chi_{1},\ldots, \chi_{m})$ together with its restrictions $\chi_{i}$ on $Z(G_{i})$. We write $\operatorname{supp}(\chi)$ for the set of all $i\in [m]$ such that $\chi_{i}\neq 0$.

An alternative way to compute $n(\chi)$ is given in \cite[Theorem 6.1]{Merkurjev1} and \cite[Theorem 4.4, Remark 4.5]{KM}, where $C$ is a diagonalizable group. Namely, consider a central extension $1\to C\to G\to G'\to 1$ over a field $k$ and the induced sequence
\begin{equation*}
H^1(K,G')\xrightarrow[]{\partial}H^2(K,C)\xrightarrow[]{\chi_*}H^2(K,\gm)=\operatorname{Br}(K)
\end{equation*}
for a field extension $K/k$. Then, for a versal $G'$-torsor $E$ we have  
\begin{equation}\label{eq:ngchipartial}
	n(\chi)=\ind\big(\chi_{*}\circ\partial (E)\big),
\end{equation}
where the right-hand side denotes the index of a Brauer class of $\chi_{*}\circ\partial (E)$.

Now we assume that the central subgroup $C$ is isomorphic to $(\gmu_{p})^{r}$ or $(\gm)^{r}$ over a field $k$ of characteristic different from $p$. We further assume that the image $\partial (E)$ is $p$-torsion for some prime integer $p$ in the case where $C\simeq (\gm)^{r}$. We say that a generating set $B$ of the character group $C^{*}$ is \emph{index-minimal} if the sum $\sum_{\chi\in B}n(\chi)$ attains the minimum value over all generating sets of $C^{*}$.  Then, by \cite[Theorem 4.1]{Reichstein}, \cite[Proposition 2.1]{CM}, \cite[Example 3.4]{Lotscher}, and \cite[Proposition 2.1]{CR}, the best known general bounds for the essential dimension of $G$ is given as follows 
\begin{equation}\label{generalbounds}
	\sum_{\chi\in B}n(\chi) -\dim G\,\leq\,  \ed(G)\,\leq\, 	\sum_{\chi\in B}n(\chi)+\ed(G')-\dim C,
\end{equation}
where $B$ is an index-minimal generating set of $C^*$. We remark that the central subgroup $C$ is not necessarily the same for the lower and upper bounds in (\ref{generalbounds}). When $G$ is semisimple but not simple with $C=Z(G)$, one can estimate the essential dimension $\ed(G)$ together with an upper bound of $\ed(\bar{G})$ of the adjoint group $\bar{G}:=G/Z(G)$, although there is a gap of $\ed(\bar{G})+\dim G$ between the upper and lower bound in (\ref{generalbounds}).

In the following theorem, we provide a new upper bound for the essential dimension of a reduced group. This result is a generalization of \cite[Theorem 2.4]{edtypeB} to a semisimple group and its strict reductive envelope.

\begin{theorem}\label{thm:main1}
	Let $G$ be a split semisimple group over a field $k$ with $m$ simple components and let $G_{\red}$ be its strict reductive envelope. Let $B$ be a generating set of $Z(G)^{*}$ of minimal cardinality and let $\rho_{\chi}: G\to \GL(V_{\chi})$ be representations in $\operatorname{Rep}^\chi(G)$ for all $\chi\in B$. Assume that there is a nonempty subset $B_{0}$ of $B$ with $\bigcup_{\chi\in B_{0}}\operatorname{supp}({\chi})=[m]$.
	\begin{enumerate}
		\item[(i)] If the generic stabilizer of $V_\chi$ in $\rho_{\chi}(G)$ is trivial for each $\chi\in B_{0}$, then 
		\begin{equation*}
		\ed(G)\leq \sum_{\chi\in B}\dim V_{\chi}- \dim G.
		\end{equation*}
		
		\item[(ii)]If the generic stabilizer of $\P(V_\chi)$ in $\bar{\rho}_{\chi}(G)$ is trivial for each $\chi\in B_{0}$, where $\bar{\rho}_{\chi}:G\to \PGL(V_{\chi})$ denotes the projective representation induced by $\rho_{\chi}$, then 
		\begin{equation*}
		\ed(G_{\red})\leq \sum_{\chi\in B}\dim V_\chi-\dim G -\rank Z(G).
		\end{equation*}
	\end{enumerate}
\end{theorem}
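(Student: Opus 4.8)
The plan is to build, for each part, an explicit generically free representation of the relevant group whose dimension gives the stated bound via the elementary inequality \eqref{lem:upperbounds}. For part (i), consider the direct sum $V=\bigoplus_{\chi\in B}V_{\chi}$ with $G$ acting diagonally through the $\rho_{\chi}$. Since $\dim V=\sum_{\chi\in B}\dim V_{\chi}$, the bound \eqref{lem:upperbounds} will give $\ed(G)\leq \sum_{\chi\in B}\dim V_{\chi}-\dim G$ once $V$ is shown to be generically free. For this I would first observe that the generic stabilizer $S$ of the $G$-action on $V$ is contained in the generic stabilizer of each summand $V_{\chi}$. Each $\rho_{\chi}$ has kernel contained in $Z(G)$ (as $V_{\chi}\in\operatorname{Rep}^{\chi}(G)$ forces the derived subgroup to act faithfully modulo the scalar action — more precisely $\ker\rho_{\chi}$ is central), and on the image $\rho_{\chi}(G)=G/\ker\rho_{\chi}$ the generic stabilizer is trivial by hypothesis for $\chi\in B_{0}$; pulling back, the generic stabilizer of $V_{\chi}$ in $G$ is exactly $\ker\rho_{\chi}$, a central subgroup. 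Hence $S\subseteq\bigcap_{\chi\in B_{0}}\ker\rho_{\chi}\subseteq Z(G)$. The key point is then that $\bigcap_{\chi\in B_{0}}\ker\rho_{\chi}=1$: an element $c\in Z(G)$ lies in $\ker\rho_{\chi}$ iff $\chi(c)=1$, so $c$ is killed by all $\chi\in B_{0}$; writing $c=(c_{1},\dots,c_{m})$ componentwise, the condition $\bigcup_{\chi\in B_{0}}\operatorname{supp}(\chi)=[m]$ ensures that for each $i$ some $\chi\in B_{0}$ has $\chi_{i}\neq 0$, which (together with a rank argument using that $B$ is a minimal generating set of $Z(G)^{*}$, so the $\chi\in B$ separate points of $Z(G)$) forces $c=1$. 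The slightly delicate part is making precise that the characters in $B_{0}$ already separate points of $Z(G)$; this may need the reducedness hypothesis and the structure of $Z(G)$ as a product of the $Z(\tilde G_i)$ modulo $\mu$.

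For part (ii), the natural candidate is the projective representation: let $\bar V=\bigoplus_{\chi\in B}\P(V_{\chi})$ — more precisely, take $\bar\rho\colon G\to \prod_{\chi\in B}\PGL(V_{\chi})$ with components $\bar\rho_{\chi}$, and regard this as a homomorphism into a suitable ambient $\GL$. I would instead work directly with $G_{\red}$: choose the strict reductive envelope $G_{\red}=(G\times T)/Z(G)$ with $T$ a split torus of rank $\rank Z(G)$ containing $Z(G)$, which is legitimate since $\ed(G_{\red})$ is independent of the choice by Proposition~\ref{prop:twosre}. Each $\rho_{\chi}$ extends to a representation $\tilde\rho_{\chi}$ of $G\times T$ on $V_{\chi}$ by letting $T$ act through the character of $T$ extending $\chi$ (using that $\chi\colon Z(G)\to\gm$ extends to $T\to\gm$ because $T^{*}\twoheadrightarrow Z(G)^{*}$), and then $\tilde\rho_{\chi}$ descends to $G_{\red}$ because the actions of the two copies of $Z(G)$ cancel. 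Now form $W=\bigoplus_{\chi\in B}V_{\chi}$ as a $G_{\red}$-representation; then $\dim W=\sum_{\chi\in B}\dim V_{\chi}$ and $\dim G_{\red}=\dim G+\rank Z(G)$, so \eqref{lem:upperbounds} will give exactly the claimed bound provided $W$ is generically free over $G_{\red}$. To verify generic freeness, note the generic stabilizer of $W$ in $G_{\red}$ maps, under each $V_{\chi}$-projection, into the generic stabilizer of $V_{\chi}$ in $\tilde\rho_{\chi}(G_{\red})=\bar\rho_{\chi}(G)\cdot(\text{scalars})$, which modulo scalars is the generic stabilizer of $\P(V_{\chi})$ in $\bar\rho_{\chi}(G)$, trivial for $\chi\in B_{0}$ by hypothesis. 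So the generic stabilizer of $W$ is contained in the subgroup of $G_{\red}$ acting by scalars on every $V_{\chi}$ with $\chi\in B_{0}$; this scalar subgroup is a quotient of $T$ (the image of $T$), and the fact that $\bigcup_{\chi\in B_{0}}\operatorname{supp}(\chi)=[m]$ together with the minimality of $B$ forces it to be trivial in $G_{\red}$, because an element of $G_{\red}$ acting by scalars on all $V_{\chi}$, $\chi\in B_0$, must lie in the image of $T$ and be killed by all those extended characters, hence trivial by the same separation-of-points argument as in (i) applied inside $T$.

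The main obstacle I anticipate is the bookkeeping around the scalar subgroups and kernels: precisely identifying the generic stabilizer of the direct sum as the intersection of the per-summand stabilizers (a standard fact, but one should note the generic stabilizer of a direct sum is contained in, though generally not equal to, the intersection — for the upper bound only the containment is needed, so this is fine), and then showing that intersection is trivial using only that $B_{0}$'s supports cover $[m]$ and $B$ generates $Z(G)^{*}$ minimally. In part (ii) there is the extra subtlety that we must track which elements of $G_{\red}$ act as genuine scalars on a given $V_{\chi}$ — these are the elements of the image of the central torus $T$ — and confirm that requiring scalar action on all $\chi\in B_{0}$ simultaneously pins down the trivial element of $T/Z(G)$'s image in $G_{\red}$. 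I would isolate this last claim as a short lemma: for the torus $T$ with $T^{*}\twoheadrightarrow Z(G)^{*}$ sending a lift $\tilde\chi\mapsto\chi$, if $t\in T$ satisfies $\tilde\chi(t)=1$ for all $\chi\in B_{0}$ and $B$ generates $Z(G)^{*}$, then $t$ lies in the connected component / is trivial modulo the relevant identifications. Everything else is a routine assembly of \eqref{lem:upperbounds}, the extension of $\rho_{\chi}$ to $G\times T$, and Proposition~\ref{prop:twosre}.
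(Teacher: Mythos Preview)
Your overall strategy---take $V=\bigoplus_{\chi\in B}V_{\chi}$ and show it is generically free for $G$ (resp.\ for a well-chosen $G_{\red}$)---is exactly the paper's. However, two of your intermediate claims are false, and the argument as written does not go through.

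\textbf{First gap: $\ker\rho_{\chi}$ need not be central.} The condition $V_{\chi}\in\operatorname{Rep}^{\chi}(G)$ only says that $Z(G)$ acts through $\chi$; it says nothing about simple factors $\tilde G_{i}$ with $i\notin\operatorname{supp}(\chi)$, which may well act trivially on $V_{\chi}$. So individual kernels can contain entire simple factors. What \emph{is} true is that the identity component of $\ker\rho_{\chi}$ is (after passing to the simply connected cover $\tilde G$) a product of simple factors $\tilde G_{i}$, and the support hypothesis $\bigcup_{\chi\in B_{0}}\operatorname{supp}(\chi)=[m]$ forces $\bigcap_{\chi\in B_{0}}(\ker\rho_{\chi})^{\circ}=1$. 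Hence $\bigcap_{\chi\in B_{0}}\ker\rho_{\chi}$ is finite and normal in the connected group $\tilde G$, therefore central. This is how the paper obtains $S\subset Z(\tilde G)$; your justification (``$V_\chi\in\operatorname{Rep}^\chi$ forces the derived subgroup to act faithfully'') is simply wrong.

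\textbf{Second gap: you cannot conclude triviality from $B_{0}$ alone.} You claim $\bigcap_{\chi\in B_{0}}\ker\rho_{\chi}=1$ and flag as ``delicate'' the hope that the characters in $B_{0}$ already separate points of $Z(G)$. They need not. Take $G=\gSpin(20)$, so $m=1$, $Z(G)^{*}\simeq(\Z/2)^{2}$, and let $B=\{\chi_{1},\chi_{2}\}$ correspond to the two half-spin representations. With $B_{0}=\{\chi_{1}\}$ the support condition holds and $V(20)^{+}$ is generically free for $\rho_{\chi_{1}}(G)$, yet $\ker\rho_{\chi_{1}}=\ker\chi_{1}\simeq\gmu_{2}$ is nontrivial. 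The correct step is: once $S\subset Z(\tilde G)$ (from $B_{0}$), use that for \emph{every} $\chi\in B$ one has $S_{\chi}\cap Z(\tilde G)=\ker\chi$ (because $Z(\tilde G)$ acts on $V_{\chi}$ by the scalar $\chi$), and then
\[
S\;=\;\bigcap_{\chi\in B}S_{\chi}\;\subset\;\bigcap_{\chi\in B}\ker\chi\;=\;\mu,
\]
the last equality because $B$ generates $Z(G)^{*}$. Thus $B_{0}$ gives centrality, while the full set $B$ gives triviality; your argument collapses these two roles. The same two issues recur verbatim in your treatment of part~(ii): elements of $G_{\red}$ acting by scalars on all $V_{\chi}$ with $\chi\in B_{0}$ need not lie in the image of $T$ for the reason above, and even once centrality is established you must intersect over all of $B$ (equivalently, over all the lifted characters $\hat\chi$) to kill the central torus.
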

\begin{proof}
	$(\operatorname{i})$ Let $G=\tilde{G}/\mu$, where $\tilde{G}$ denotes the product of simple simply connected groups $\tilde{G}_{1},\ldots, \tilde{G}_{m}$ and $\mu$ denotes a central subgroup. Let $B$ be a generating set of $Z(G)^{*}$. Consider finite dimensional representations $\rho_{\chi}: G\to \GL(V_{\chi})$ of $\operatorname{Rep}^\chi(G)$ for each $\chi\in B$. We view both $\chi$ and $\rho_{\chi}$ as contained in $Z(\tilde{G})^{*}$ and $\operatorname{Rep}^\chi(\tilde{G})$, respectively.

	We show that the sum $V=\bigoplus_{\chi\in B}V_{\chi}$ is a generically free representation of $G$. Hence, the the inequality in $(\operatorname{i})$ follows from (\ref{lem:upperbounds}). Since the generic freeness of a vector space is not changed under extension of the base field $k$, we may assume that $k$ is algebraically closed.
	
	Let $S_{\chi}$ be a generic stabilizer for the $\tilde{G}$-action on $V_{\chi}$ for all $\chi\in B$. Given two characters $\chi, \lambda\in B$, consider the subgroups
	\[S_{\chi}\cap S_{\lambda}\subset S_{\chi}\subset \tilde{G}.\]
	Since the intersection $S_{\chi}\cap S_{\lambda}$ is a generic stabilizer for the $S_{\chi}$-action on $V_{\lambda}$, it follows from \cite[Proposition 8]{VLPopov} that $S_{\chi}\cap S_{\lambda}$ becomes a generic stabilizer for the $\tilde{G}$-action on $V_{\chi}\bigoplus V_{\lambda}$. Therefore, by an inductive argument on $|B|$, the intersection $S:=\bigcap_{\chi\in B}S_{\chi}$ is a generic stabilizer for the $\tilde{G}$-action on $V$.
	
	Since the center $Z(G)$ is abelian and $k$ is algebraically closed, we have
	\begin{equation}\label{eq:kerchi}
	\bigcap_{\chi\in B}\ker(\chi)=\bigcap_{\chi\in Z(G)^{*}} \ker(\chi)=\mu.
	\end{equation}
	As $\tilde{G}$ is simply connected, the identity component of $\ker(\rho_{\chi})$, denoted by $K_{\chi}$, is given by the product of a subset of the simple factors $\tilde{G}_{i}$, thus we have 
	\begin{equation}\label{eq:kchi}
	K_{\chi}=\ker(\bar{\rho}_{\chi})^{\circ},
	\end{equation}
	where $\ker(\bar{\rho}_{\chi})^{\circ}$ denotes the identity component of $\ker(\bar{\rho}_{\chi})$. It follows from the assumption $\bigcup_{\chi\in B_{0}}\operatorname{supp}({\chi})=[m]$ that for each $i\in [m]$ there exists $\chi\in B_{0}$ such that the image $\rho_{\chi}(G_{i})$ is non-trivial. Therefore, we obtain
	\begin{equation}\label{eq:connectedcomp}
	\bigcap_{\chi\in B_{0}}K_{\chi}=\{1\}, \text{ thus } \bigcap_{\chi\in B_{0}}\ker(\rho_{\chi})\subset Z(\tilde{G}).
	\end{equation}

	Our assumption on the trivial generic stabilizers implies that $S_{\chi}=\ker(\rho_{\chi})$ for all $\chi\in B_{0}$. Hence, by (\ref{eq:connectedcomp}) the generic stabilizer $S$ is central in $Z(\tilde{G})$. As $\rho_{\chi}$ is a representation in $\operatorname{Rep}^{\chi}(\tilde{G})$, we have $S_{\chi}\cap Z(\tilde{G})=\ker(\chi)$, thus by (\ref{eq:kerchi}) we conclude that $S=\mu$, i.e., $V$ is a generically free representation of $\tilde{G}/\mu$.

	\medskip

	\noindent($\operatorname{ii}$) For each $i$, take an embedding of $Z(\tilde{G}_{i})$ to a split torus $T_{i}$ so that we have the induced embedding of $Z(\tilde{G})$ to a split torus $T:=T_{1}\times \cdots \times T_{m}$. Consider strict reductive envelopes 
	\begin{equation*}
	\hat{G}_{i}:=(\tilde{G}_{i}\times T_{i})/Z(\tilde{G}_{i})\,\, \text{ and }\,\, \hat{G}:=(\tilde{G}\times T)/Z(\tilde{G})
	\end{equation*} 
	of $\tilde{G}_{i}$ and $\tilde{G}$, respectively, where $Z(\hat{G}_{i})\simeq T_{i}$ and $Z(\hat{G})\simeq T$. Let $\hat{\chi}$ denote a preimage of $\chi$ under the surjection $Z(\hat{G})^{*}\to Z(\tilde{G})^{*}$ dual to the embedding $Z(\tilde{G})\hookrightarrow  Z(\hat{G})$. Let $T$ act on $V(\chi)$ by scalar multiplication via a character $\hat{\chi}$. Then, togheter with the action of $\tilde{G}$ via $\rho_{\chi}$, we have a representation $\rho_{\hat{\chi}}:\hat{G}\to \GL(V_{\chi})$ whose restriction on $\tilde{G}$ is $\rho_{\chi}$. Set 
	\begin{equation*}
	G_{\red}=\hat{G}/\hat{\mu},\, \text{ where } \hat{\mu}=\bigcap_{\chi\in B}\ker(\hat{\chi}).
	\end{equation*}
	Then, by (\ref{eq:kerchi}) $G_{\red}$ is a strict envelope of $G$.

	Let $S_{\hat{\chi}}$ be a generic stabilizer for the $\hat{G}$-action on $V_{\chi}$. Since for $\chi\in B_{0}$, the generic stabilizer of $\P(V_{\chi})$ is trivial in $\bar{\rho}_{\chi}(G)$, so is in $\rho_{\chi}(G)/Z\big(\rho_{\chi}(G)\big)=\rho_{\hat{\chi}}(\hat{G})/Z\big(\rho_{\hat{\chi}}(\hat{G})\big)$. As $\rho_{\hat{\chi}}\in \Rep^{\hat{\chi}}(\hat{G})$, it follows by \cite[Lemma 3.1]{RV2007} that $S_{\hat{\chi}}=\ker(\rho_{\hat{\chi}})$.

	Now we show that the intersection $\bigcap_{\chi\in B_{0}}\ker(\rho_{\hat{\chi}})$ is central in $\hat{G}$. For each $\chi\in B_{0}$, we set $\hat{G}_{\chi}=\hat{G}/K_{\chi}$ and $\tilde{G}_{\chi}=\tilde{G}/K_{\chi}$. Then, $\hat{G}_{\chi}$ is a strict reductive envelope of $\tilde{G}_{\chi}$ and its center is given by
    \begin{equation}\label{eq:centerhatG}
    Z(\hat{G}_{\chi})\simeq \prod_{G_{i}\not\subset K(\chi)} Z(\hat{G}_{i})\times \prod_{G_{i}\subset K(\chi)} \hat{G}_{i}/G_{i}.
    \end{equation}	
    
    By abuse of notation, we still denote by $\bar{\rho}_{\chi}:\tilde{G}_{\chi}\to \PGL(V_{\chi})$ and $\rho_{\hat{\chi}}: \hat{G}_{\chi}\to \GL(V_{\chi})$ the induced maps. Then, by (\ref{eq:kchi}) the group $\ker(\bar{\rho}_{\chi})$ is central in $\tilde{G}_{\chi}$. Since $Z(\tilde{G}_{\chi})=Z(\hat{G}_{\chi})\cap \tilde{G}_{\chi}$ and $\rho_{\hat{\chi}}\in \Rep^{\hat{\chi}}(\hat{G})$,
    \begin{equation}\label{eq:kerrhocontained}
    \ker(\rho_{\hat{\chi}})\subset Z(\hat{G}_{\chi}).
    \end{equation}
	By assumption, for each $i\in [m]$ there exists $\chi\in B_{0}$ such that $\tilde{G}_{i}\not\subset K_{\chi}$, thus it follows from (\ref{eq:centerhatG}) and (\ref{eq:kerrhocontained}) that $\bigcap_{\chi\in B_{0}}\ker(\rho_{\hat{\chi}})\subset Z(\hat{G})$. Hence,  the same arguments as in (i) show that $V$ is a generically free representation of $G_{\red}$, thus
	 \begin{equation}\label{eq:uppergred}
	\ed(G_{\red})\leq \sum_{\chi\in B} \dim V_\chi - \dim G_{\red}.
	\end{equation}

	It follows from the surjection $Z(G_{\red})^{*}\to Z(G)^{*}$ that $\rank Z(G_{\red})\geq \rank Z(G)$. Since $Z(G_{\red})^{*}=(Z(\hat{G})/\hat{\mu})^{*}$, by the construction of $\hat{\mu}$ the set $\hat{B}:=\{\hat{\chi}\,|\, \chi\in B\}$ generate $Z(G_{\red})^{*}$, thus from the assumption on $B$ i.e., $|B|=\rank Z(G)$, we obtain
	\begin{equation}\label{eq:rankzg}
		\rank Z(G_{\red})=\rank Z(G)=|B|.
	\end{equation}
	As $\dim G_{\red}=\dim G+\rank Z(G_{\red})$, the inequality in (ii) follows from (\ref{eq:uppergred}), (\ref{eq:rankzg}), and Proposition \ref{prop:twosre}.
\end{proof}

The upper bound of Theorem \ref{thm:main1} matches the best known lower bound in (\ref{generalbounds}) under certain conditions:

\begin{corollary}$($cf. \cite[Corollary 6.3]{Merkurjev1}$)$\label{cor:cor1}
In the setting of Theorem \ref{thm:main1}, assume in addition that
for a prime $p\neq \operatorname{char}(k)$,
\begin{enumerate}
	\item[(i)] The $p$-socle $C$ of $Z(G)$ is isomorphic to $(\gmu_{p})^{r}$ for some $r\geq 1$. 
	
	\smallskip
	
	\item[(ii)] $\bar{B}$ is an index-minimal basis of $C^{*}$ such that $\dim V_{\chi}=n(\bar{\chi})$ for all $\chi\in B$,
\end{enumerate}
where $\bar{B}$ and $\bar{\chi}$ denote the images of $B$ and $\chi$ under the restriction homomorphism $Z(G)^{*}\to C^{*}$. Then,	
\begin{equation}\label{cor1:equation}
\ed(G)=\sum_{\chi\in B}\dim V_{\chi}- \dim G.
\end{equation}
Furthermore, if the assumption $(\operatorname{i})$ is replaced by the stronger assumption that $Z(G)$ is a $p$-group, then for any strict reductive envelope $G_{\red}$ of $G$
\begin{equation}\label{cor1:equation2}
\ed(G_{\red})=\ed(G)-\rank Z(G).
\end{equation}
\end{corollary}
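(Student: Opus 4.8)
The plan is to sandwich each of the two essential dimensions between an upper bound from Theorem \ref{thm:main1} and the lower bound of (\ref{generalbounds}). For (\ref{cor1:equation}), the inequality $\ed(G)\le\sum_{\chi\in B}\dim V_\chi-\dim G$ is precisely Theorem \ref{thm:main1}(i), whose hypothesis on generic stabilizers is part of the assumed setting. For the opposite inequality I would apply the left-hand estimate of (\ref{generalbounds}) to the central subgroup $C\simeq(\gmu_p)^r$ --- the $p$-socle of $Z(G)$ --- via the central extension $1\to C\to G\to G/C\to 1$, which is legitimate since $p\ne\ch(k)$. By assumption (ii) the image $\bar B$ of $B$ under the restriction $Z(G)^*\twoheadrightarrow C^*$ is an index-minimal basis of $C^*$ and $\chi\mapsto\bar\chi$ is a bijection of $B$ with $\bar B$, so $\sum_{\chi\in B}\dim V_\chi=\sum_{\chi\in B}n(\bar\chi)=\sum_{\psi\in\bar B}n(\psi)$. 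Then (\ref{generalbounds}) gives $\ed(G)\ge\sum_{\psi\in\bar B}n(\psi)-\dim G=\sum_{\chi\in B}\dim V_\chi-\dim G$, and comparing the two bounds yields (\ref{cor1:equation}).

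For (\ref{cor1:equation2}) assume $Z(G)$ is a $p$-group, write $Z(G)\simeq\prod_{i=1}^{t}\gmu_{p^{a_i}}$ with $t=\rank Z(G)=|B|$, and take the minimal strict reductive envelope $G_{\red}=(G\times T)/Z(G)$ arising from the coordinatewise embedding $Z(G)\hookrightarrow T=(\gm)^{t}$; then $Z(G_{\red})\simeq T$, $\dim G_{\red}=\dim G+\rank Z(G)$, and --- the observation that makes the argument go --- the $p$-socle $C_{\red}$ of $Z(G_{\red})$ coincides with the $p$-socle $C$ of $Z(G)$, both being $(\gmu_p)^{t}$ inside $T$. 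By Proposition \ref{prop:twosre} it suffices to treat this particular envelope. The upper bound $\ed(G_{\red})\le\sum_{\chi\in B}\dim V_\chi-\dim G-\rank Z(G)$ is Theorem \ref{thm:main1}(ii), whose projective generic-stabilizer hypothesis belongs to the setting and in particular implies the hypothesis of part (i); by (\ref{cor1:equation}) this reads $\ed(G_{\red})\le\ed(G)-\rank Z(G)$. For the reverse inequality I would apply (\ref{generalbounds}) to $G_{\red}$ with the central subgroup $C_{\red}\simeq(\gmu_p)^{t}$ and the extension $1\to C_{\red}\to G_{\red}\to G_{\red}/C_{\red}\to 1$. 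The crux is the equality $n_{G_{\red}}(\psi)=n_G(\psi)$ for every $\psi\in C_{\red}^*=C^*$: the inequality ``$\ge$'' is immediate by restricting a representation in $\Rep^\psi(G_{\red})$ along $G\hookrightarrow G_{\red}$, while for ``$\le$'', given an irreducible $W\in\Rep^\psi(G)$ the center $Z(G)$ acts on $W$ through a single character $\eta$ with $\eta|_C=\psi$, and extending $\eta$ to a character of $T$ and letting $T$ act through it exhibits $W$ as a representation of $G_{\red}$ of the same dimension lying in $\Rep^\psi(G_{\red})$; taking the $\gcd$ over irreducibles then gives the claim. Hence $\bar B$ is again an index-minimal basis of $C_{\red}^*$ with $\dim V_\chi=n_{G_{\red}}(\bar\chi)$, so (\ref{generalbounds}) yields $\ed(G_{\red})\ge\sum_{\psi\in\bar B}n_{G_{\red}}(\psi)-\dim G_{\red}=\sum_{\chi\in B}\dim V_\chi-\dim G-\rank Z(G)=\ed(G)-\rank Z(G)$, proving (\ref{cor1:equation2}).

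The step I expect to be the main obstacle is the equality $n_{G_{\red}}(\psi)=n_G(\psi)$, and in particular the inequality $n_{G_{\red}}(\psi)\le n_G(\psi)$, since it hinges on an irreducible representation of the semisimple group $G$ carrying a well-defined $Z(G)$-central character that extends from $Z(G)$ to the split torus $T$, so that the representation descends to $G_{\red}$ without altering its dimension. The remaining points are essentially bookkeeping: the identification $C=C_{\red}$, the compatibility of the various avatars of a character ($\chi\in Z(G)^*$, its restriction $\bar\chi$ to the $p$-socle, the lift $\hat\chi$ used in the proof of Theorem \ref{thm:main1}(ii), and $\psi$ regarded in $C^*=C_{\red}^*$) so that index-minimality of $\bar B$ transfers from $G$ to $G_{\red}$, and the verification that the hypotheses required for the lower bound in (\ref{generalbounds}) --- a central subgroup isomorphic to $(\gmu_p)^r$ with $p\ne\ch(k)$ --- hold for both $G$ and $G_{\red}$, which is exactly what assumption (i) and its strengthening provide.
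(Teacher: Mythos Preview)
Your argument is correct. For \eqref{cor1:equation} it is essentially identical to the paper's: upper bound from Theorem~\ref{thm:main1}(i), lower bound from \eqref{generalbounds} applied to the $p$-socle $C$.

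For \eqref{cor1:equation2} your route differs from the paper's in two respects. First, you apply the lower bound \eqref{generalbounds} to the central subgroup $C_{\red}\simeq(\gmu_p)^t$ of $G_{\red}$, having chosen the envelope so that $C_{\red}$ literally coincides with $C$ inside $T$; the paper instead applies \eqref{generalbounds} to the full torus $Z(G_{\red})$, which forces it to verify that $\hat\partial(E)$ is $p$-torsion (this is where the paper uses that $Z(G)$ is a $p$-group), and to show that the lifted set $\hat B\subset Z(G_{\red})^*$ is index-minimal. Second, your key identity $n_{G_{\red}}(\psi)=n_G(\psi)$ is established by elementary representation theory---restriction for ``$\ge$'' and extending the $Z(G)$-central character of an irreducible to $T$ for ``$\le$''---whereas the paper deduces $n(\hat\chi)=n(\chi)$ from the cohomological formula \eqref{eq:ngchipartial} and the commutative square \eqref{eq:diagramgbar} over the common adjoint quotient $\bar G$. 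Your approach is more self-contained and sidesteps the $p$-torsion check; the paper's approach reuses the envelope $\hat G/\hat\mu$ and the characters $\hat\chi$ already built in the proof of Theorem~\ref{thm:main1}(ii), and its cohomological argument gives the stronger chain $n(\hat\chi)=n(\chi)\ge n(\bar\chi)$ for \emph{every} $\hat\chi\in Z(G_{\red})^*$, from which index-minimality of $\hat B$ follows immediately. Both yield the same bound.
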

\begin{proof}
The equality (\ref{cor1:equation}) immediately follows from (\ref{generalbounds}) and Theorem \ref{thm:main1}. Let $G_{\red}=\hat{G}/\hat{\mu}$ as defined in the proof of Theorem \ref{thm:main1}. For any character $\hat{\chi}$ of $Z(G_{\red})^{*}$, let $\chi$ denote its image under the restriction map $Z(G_{\red})^{*}\to Z(G)^{*}$. Consider the diagram (\ref{eq:twosre}) in which $G_{i}$ and $\varphi_{i}$ are replaced by $G_{\red}$ and the embedding of $G\hookrightarrow G_{\red}$, respectively. Then, this induces the following commutative diagram
\begin{equation}\label{eq:diagramgbar}
\begin{tikzcd}
H^1(K,\bar{G}) \arrow[r, "\partial"] \arrow[d,equal] & H^2(K,Z(G)) \arrow[r, "\chi_{*}"] \arrow[d] & H^2(K,\gm)  \arrow[d, equal] \\
H^1(K,\bar{G}) \arrow[r, "\hat{\partial}"] & H^2(K,Z(G_{\red})) \arrow[r, "\hat{\chi}_{*}"] & H^2(K,\gm),
\end{tikzcd}
\end{equation}
where $K/k$ is a field extension. From the commutativity of the diagram $(\ref{eq:diagramgbar})$ and (\ref{eq:ngchipartial}) it follows that for any $\hat{\chi}\in Z(G_{\red})^{*}$
\begin{equation}\label{eq:nchihatnchibar}
n(\hat{\chi})=n(\chi)\geq n(\bar{\chi}),
\end{equation}
where $\bar{\chi}$ denote the image of $\chi$ under the restriction map $Z(G)^{*}\to C^{*}$.

Let $\hat{B}=\{\hat{\chi}\,|\, \chi\in B\}$ be a generating set of $Z(G_{\red})^{*}$ as defined in the proof of Theorem \ref{thm:main1}. By assumption (ii), the equality in (\ref{eq:nchihatnchibar}) holds for all $\chi\in B$, thus
\begin{equation}\label{eq:minimalsums}
\sum_{\chi\in B}\dim V_{\chi}=\sum_{\chi\in B}n(\hat{\chi})=\sum_{\chi\in B}n(\bar{\chi}).
\end{equation}
As the inequality (\ref{eq:nchihatnchibar}) holds for any $\hat{\chi}\in Z(G_{\red})^{*}$, it follows from the assumption (ii) of the minimality of the sum in (\ref{eq:minimalsums}) that $\hat{B}$ is an index-minimal generating set of $Z(G_{\red})^{*}$.

As $Z(G)$ is a $p$-group, it follows from $(\ref{eq:diagramgbar})$ that $\hat{\partial}(E)$ is $p$-torsion for a versal $\bar{G}$-torsor $E$. Hence, the lower bound in (\ref{generalbounds}) toghether with (\ref{eq:minimalsums}) and (\ref{cor1:equation}) give
\begin{equation*}
	\ed(G_{\red})\geq \sum_{\chi\in B}n(\hat{\chi})-\dim G_{\red}=\ed(G)-\rank Z(G).
\end{equation*}
Therefore, the equality in (\ref{cor1:equation2}) follows by Theorem \ref{thm:main1} and Proposition \ref{prop:twosre}.\end{proof}

In particular, if the group $G$ in Corollary \ref{cor:cor1} is given by a quotient of semisimple group $H$ by a central subgroup isomorphic to $\gmu_{p}$, then the essential dimension of $H$ and its reductive envelope $H_{\red}$ is determined under certain conditions:

\begin{corollary}\label{cor2}
In the setting of Corollary \ref{cor:cor1}, let $1\to \nu\to H\to G\to 1$ be a central extension of $G$ with $\nu\simeq \gmu_{p}$. Let $B\cup\{\omega\}$ be a generating set of $Z(H)^{*}$ whose restriction $\bar{B}\cup\{\bar{\omega}\}$ on the $p$-socle $D$ of $Z(H)$ is an index-minimal basis of $D^{*}$, where the subset $B$ is viewed as contained in $Z(H)^{*}$ by means of the morphism $Z(H)\to Z(G)$. Then,
\begin{equation*}
	\ed(H)=\ed(G)+n_{H}(\check{\omega})\,\, \text{ and }\,\, \ed(H_{\red})=\ed(H)-\rank Z(H),
\end{equation*}
where $\check{\omega}$ denotes the restriction of $\omega$ on $\nu$ and $H_{\red}$ denotes a strict reductive envelope of $H$.
\end{corollary}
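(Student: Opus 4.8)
The plan is to reduce everything to Corollary \ref{cor:cor1} applied to the group $G$, combined with the relation \eqref{eq:ngchipartial} expressing $n(\chi)$ as an index of a Brauer class coming from the connecting map $\partial$. First I would set up the two central extensions side by side: the extension $1\to\nu\to H\to G\to 1$ with $\nu\simeq\gmu_p$ and the extension $1\to Z(G)\to G\to\bar G\to 1$ (equivalently $1\to Z(H)\to H\to\bar G\to 1$, since $H$ and $G$ have the same adjoint group $\bar G$). Writing $B\cup\{\omega\}$ for the chosen generating set of $Z(H)^*$ with $\bar B\cup\{\bar\omega\}$ an index-minimal basis of the $p$-socle $D^*$, I would first observe that the restriction of $\omega$ to $\nu$ is $\check\omega$, and that the composite $H^1(K,\bar G)\xrightarrow{\partial}H^2(K,Z(H))\xrightarrow{\omega_*}\operatorname{Br}(K)$ evaluated on a versal $\bar G$-torsor $E$ has index exactly $n_H(\check\omega)$: indeed $\omega$ factors through the quotient of $Z(H)$ by $\ker\check\omega$, and by \eqref{eq:ngchipartial} applied to the extension $1\to\nu\to H\to G\to 1$ (using that a versal $H$-torsor maps to a versal $\bar G$-torsor), this index equals $n_H(\check\omega)=\ind(\check\omega_*\circ\partial_H(E'))$ for a versal $H$-torsor $E'$.

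Next I would run the lower-bound machinery of \eqref{generalbounds} for the group $H$ with central subgroup $D$ (the $p$-socle of $Z(H)$) and the index-minimal basis $\bar B\cup\{\bar\omega\}$ of $D^*$. This gives
\begin{equation*}
\ed(H)\;\geq\;\sum_{\chi\in B}n(\bar\chi)\;+\;n_H(\bar\omega)\;-\;\dim H.
\end{equation*}
By \eqref{eq:nchihatnchibar}-type reasoning (the analogue of \eqref{eq:minimalsums} in Corollary \ref{cor:cor1}) together with assumption (ii) of Corollary \ref{cor:cor1}, $\sum_{\chi\in B}n(\bar\chi)=\sum_{\chi\in B}\dim V_\chi$, and $n_H(\bar\omega)=n_H(\check\omega)$ since $\bar\omega$ and $\check\omega$ cut out the same index (one restricts further to $\nu\simeq\gmu_p$, and the $p$-socle hypothesis forces these to agree — this is where I would need the index computation to be insensitive to passing between $D$ and $\nu$, exactly as \eqref{eq:nchihatnchibar} handles the $G_{\red}$ case). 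Hence $\ed(H)\geq \sum_{\chi\in B}\dim V_\chi+n_H(\check\omega)-\dim H=\ed(G)+n_H(\check\omega)$, using $\dim H=\dim G$ and the equality \eqref{cor1:equation} from Corollary \ref{cor:cor1}.

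For the matching upper bound I would invoke \cite[Lemma 1.11]{BerhuyFavi} or the general upper bound in \eqref{generalbounds}: pick a generically free representation $W$ of $G$ of dimension $\sum_{\chi\in B}\dim V_\chi$ realizing $\ed(G)$ (from Theorem \ref{thm:main1}(i)), and add one representation of $H$ lying over $\check\omega$ of dimension $n_H(\check\omega)$; the direct sum is generically free for $H$ by the same Popov-type argument (\cite[Proposition 8]{VLPopov}) used in the proof of Theorem \ref{thm:main1}, since the extra factor kills $\nu$ while $W$ already has generic stabilizer $\nu$ inside $H$. This yields $\ed(H)\leq \dim W+n_H(\check\omega)-\dim H=\ed(G)+n_H(\check\omega)$, giving the first equality. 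Finally, for the reductive-envelope statement, since $Z(G)$ is a $p$-group by the strengthened hypothesis of Corollary \ref{cor:cor1} and $\nu\simeq\gmu_p$, the group $Z(H)$ is again a $p$-group, so Corollary \ref{cor:cor1} applies verbatim to $H$ in place of $G$ and gives $\ed(H_{\red})=\ed(H)-\rank Z(H)$. The main obstacle I anticipate is the bookkeeping in the middle step: verifying that the chosen basis $\bar B\cup\{\bar\omega\}$ of $D^*$ is genuinely index-minimal for $H$ and that $n_H(\bar\omega)=n_H(\check\omega)$, i.e. that restricting the character all the way down to $\nu\simeq\gmu_p$ does not change the index — this requires carefully tracking the commutative diagram of connecting maps as in \eqref{eq:diagramgbar} and using that the relevant Brauer class is $p$-torsion.
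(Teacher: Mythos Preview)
Your treatment of $\ed(H)$ is essentially the paper's: both the lower bound via \eqref{generalbounds} with the $p$-socle $D$ and the upper bound via \eqref{generalbounds} applied to the extension $1\to\nu\to H\to G\to 1$ appear there. Two remarks. First, the ``main obstacle'' you flag is a red herring: you do not need $n_H(\bar\omega)=n_H(\check\omega)$, only $n_H(\bar\omega)\geq n_H(\check\omega)$, and that is immediate because $\Rep^{\bar\omega}(H)\subset\Rep^{\check\omega}(H)$ (restricting the central character from $D$ to the smaller subgroup $\nu$ enlarges the class of admissible representations, so the gcd can only go down). Second, your direct construction of a generically free $H$-representation by adjoining ``one representation of $H$ lying over $\check\omega$ of dimension $n_H(\check\omega)$'' tacitly assumes such a representation exists; but $n_H(\check\omega)$ is a gcd of dimensions, not a minimum, and need not be realized by any single representation. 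Since you also invoke \eqref{generalbounds} this does not damage the upper bound for $\ed(H)$, but the distinction matters below.

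There is a genuine gap in your argument for $\ed(H_{\red})$. Corollary~\ref{cor:cor1} does not apply ``verbatim'' to $H$: beyond the $p$-group condition on $Z(H)$, its hypothesis (ii) demands, for every character in the generating set $B\cup\{\omega\}$, a representation $V_\chi\in\Rep^\chi(H)$ with $\dim V_\chi=n_H(\bar\chi)$. For $\chi\in B$ this is inherited from $G$, but for $\omega$ you would need some $V_\omega\in\Rep^\omega(H)$ of dimension exactly $n_H(\bar\omega)$, and as just noted this gcd need not be achieved --- so the upper bound coming from Theorem~\ref{thm:main1}(ii) is unavailable in the required strength. The paper circumvents this entirely: it constructs an explicit strict reductive envelope $H_{\red}$ fitting into a central extension $1\to\gm\to H_{\red}\to G_{\red}\to 1$, and applies the upper bound of \eqref{generalbounds} to \emph{that} extension to get $\ed(H_{\red})\leq n_H(\check\omega)+\ed(G_{\red})-1$. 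Since $\ed(G_{\red})=\ed(G)-\rank Z(G)$ is already known from Corollary~\ref{cor:cor1} and $\rank Z(H)=\rank Z(G)+1$, this matches the lower bound (which is obtained, as you suggest, by rerunning the index-minimality argument from the proof of Corollary~\ref{cor:cor1} for $Z(H_{\red})^{*}$).
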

\begin{proof}
Let $G_{\red}$ be a strict reductive group as defined in the proof of Theorem \ref{thm:main1}. By assumption, we have $Z(H)\simeq Z(G)\times \nu$ and $D\simeq C\times \nu$. Take an embedding $\nu\hookrightarrow \gm$ so that we get the induced embedding $Z(G)\times \nu\hookrightarrow Z(G_{\red})\times \gm$. Consider a strict reductive envelope $H_{\red}:=\big(H\times Z(G_{\red})\times \gm\big)/Z(H)$ of $H$. Then, we have a central extension
\begin{equation*}
1\to \gm\to H_{\red}\to G_{\red}\to 1
\end{equation*}
and $Z(H_{\red})\simeq Z(G_{\red})\times \gm$.

By abuse of notation, we still denote by $\hat{\chi}$, $\chi$, and $\bar{\chi}$ the characters of $Z(H_{\red})^{*}$, $Z(H)^{*}$, and $D^{*}$ induced by the projection $Z(H_{\red})\to Z(G_{\red})$. Similarly, we denote by $\hat{\omega}$ a preimage of $\omega$ under the surjection $Z(H_{\red})^{*}\to Z(H)^{*}$ dual to $Z(H)\hookrightarrow Z(H_{\red})$.

Let $\check{\omega}$ be the restriction of $\omega$ on $\nu$. As the group $\nu^{*}$ is generated by $\check{\omega}$, it follows from the central extensions of $G$ and $G_{\red}$ and the upper bound in (\ref{generalbounds}) that
\begin{equation*}
\ed(H)\leq n_{H}(\check{\omega})+\ed(G)\,\, \text{ and }\,\, \ed(H_{\red})\leq n_{H}(\check{\omega})+\ed(G_{\red})-1.
\end{equation*}

On the other hand, as $n_{H}(\bar{\chi})\geq n_{G}(\bar{\chi})$ and $n_{H}(\bar{\omega})\geq n_{H}(\check{\omega})$, the lower bounds in (\ref{generalbounds}) and (\ref{cor1:equation}) give the opposite inequality
\begin{equation*}
	\ed(H)\geq  \sum_{\chi\in B}n_{G}(\bar{\chi})+n_{H}(\check{\omega})-\dim H=\ed(G)+n_{H}(\check{\omega})
\end{equation*}
thus the first equation in the statement follows. The same proof of Corollary \ref{cor:cor1} shows that
\begin{equation*}
\ed(H_{\red})\geq \sum_{\chi\in B} n_{H}(\bar{\chi})+n_{H}(\bar{\omega})-\dim H_{\red}\geq \sum_{\chi\in B} n_{G}(\bar{\chi})+n_{H}(\check{\omega})-\dim H_{\red}. 
\end{equation*} 
As $\dim H_{\red}=\dim H + \rank Z(H)=\dim G+\rank Z(H)$, the second equation in the statement immediately follows from (\ref{cor1:equation}) and the first equation.
\end{proof}

For two algebraic groups $G_{1}$ and $G_{2}$, it has been asked whether the equality holds in the following general inequality $\ed(G_1\times G_2)\leq \ed(G_1)+\ed(G_2)$ (see \cite[Lemma 1.11]{BerhuyFavi}). It follows by \cite[Lemma 4.5]{Lotscher} that this equality holds for the groups whose essential dimension can be obtained by Corollary \ref{cor:cor1} or Corollary \ref{cor2}.

\section{Application to groups of classical type and type $E_{6}$}

In this section, we apply Theorem \ref{thm:main1}, and Corollaries \ref{cor:cor1} and \ref{cor2} to calculate the essential dimension of a reduced semisimple group of types $A$, $B$, $C$, $D$, $E_{6}$ and its strict reductive envelope. In particular, we present an application to algebraic theory of quadratic forms in the last subsection. In order to apply our main theorem \ref{thm:main1}, we shall need certain generically free representations of central characters of a given semisimple group. For split semisimple groups over a field of characteristic $0$, the list of non-generically free irreducible (projective) representations are classified in \cite{Ela1, Ela2, Popov1, Popov2}. Part of these classifications will be used for the application.

Let $\rho:G\to \GL(V)$ and $\bar{\rho}:G\to \PGL(V)$ denote an irreducible representation of a split semisimple group $G$ and the projective representation, respectively. If $\P(V)$ is a generically free $\bar{\rho}(G)$-representation, then obviously $V$ is a generically free $\rho(G)$-representation. Conversely, if $V$ is a generically free $\rho(G)$-representation, then by \cite[Theorem 1]{Ela2}, \cite[Theorem]{Popov1} the generic stabilizer of $\P(V)$ in $\bar{\rho}(G)$ is finite and such representations are classified in \cite{Popov1, Popov2}. However, in our application to a semisimple group of a homogeneous Dynkin type as in subsections, there is no distinction between the generic freenesses of $V$ and $\P(V)$, i.e., $V$ is a generically free $\rho(G)$-representation if and only if $\P(V)$ is a generically free $\bar{\rho}(G)$-representation.

We denote by $V(n)$ and $V(n)^{\pm}$ the spin and half spin representations of the spin group $\gSpin(n)$, respectively, where $\dim V(n)=2^{(n-1)/2}$ and $\dim V(n)^{\pm}=2^{(n-2)/2}$. To simplify the notation, we shall use $V(n)^{*}$ to denote $V(n)$ if $n$ is odd and $V(n)^{\pm}$ otherwise. Let $W(n)$ denote the vector representation of $\gSpin(n)$, the symplectic group $\gSp(n)$ with even $n$, and the special linear group $\gSL(n)$.

Let $\gGamma^{+}(n)$ and $\gOmega(n)$ denote the even Clifford group and the extended Clifford group, respectively (see \cite[\S 23, \S 13]{KMRT}). We set
\begin{equation*}
	\gGSpin(n)=\begin{cases}
		\gGamma^{+}(n) & \text{ if } n \text{ is odd},\\
		\gOmega(n) & \text{ if } n \text{ is even}.
	\end{cases}
\end{equation*}
Then, $\gGSpin(n)$ is a strict reductive envelope of $\gSpin(n)$, where the center is given by
\begin{equation*}
	Z(\gGSpin(n))=\begin{cases}
		\gm & \text{ if } n \text{ is odd},\\
		\gm\times \gm & \text{ if } n \text{ is even}.
	\end{cases}
\end{equation*}

The set of isomorphism classes of $\gGSpin(n)$-torsors can be described as follows. For a field extension $K/k$, consider the following set of isomorphism classes of nondegenerate quadratic forms $q$, 
\begin{equation*}
	I^{3}(K,n):=\{q\,\,|\,\, \dim(q)=n,\, \disc(q)=1,\, C(q)=0\},
\end{equation*}
where $\disc(q)$ denotes the discriminant of $q$ and $C(q)$ denotes the Brauer class of the (even) Clifford algebra of $q$ depending on the parity of $n$, i.e., $I^{3}(K,n)$ consists of the classes of $n$-dimensional forms $q$ such that $q\perp \langle -1 \rangle\in I(K)^{3}$ if $n$ is odd and $q\in I(K)^{3}$ otherwise, where $I(K)$ denotes the fundamental ideal of the Witt ring of $K$. For even $n$, we write $PI^{3}(K, n)$ for the set of similarity classes of forms in $I^{3}(K, n)$ (see \cite[\S 6]{CM}). Then, by \cite[\S 3]{CM}, \cite[Lemmas 6.1, 6.11]{Baek} we have a bijection
\begin{equation*}
	H^{1}(K, \gGSpin(n))\simeq\begin{cases}
		I^{3}(K,n) & \text{ if } n \text{ is odd},\\
		PI^{3}(K,n) & \text{ if } n \text{ is even}.
	\end{cases}
\end{equation*}

Let $\gGSp(2n)$ denote the group of symplectic similitudes (see \cite[\S 12]{KMRT}). This is a strict reductive envelope of $\gSp(2n)$ with $Z(\gGSp(2n))=\gm$.

\subsection{Groups of type $B$ and $D$}

Let $\tilde{G}=\prod_{i\in [m]}\gSpin(n_{i})$ be a simply connected group of type $B$ and $D$, where $m\geq 1$ and $n_{i}\geq 3$ for all $i\in [m]$. For each $i\in [m]$, let $V_{i}$ be either a (half)-spin representation $V(n_{i})^{*}$ or the vector representation $W(n_{i})$ with even $n_i$. Consider the tensor product representation
\begin{equation}\label{eq:bdtensorrep}
\rho: \tilde{G}\to \GL(V), \text{ where } V=\bigotimes_{i=1}^{m} V_{i}.
\end{equation}
In the table below, we summarize all groups $\tilde{G}$ having non-generically free $\rho(\tilde{G})$-representations $V$ over all possible representations $\rho$ with any $m\geq 1$, i.e., every such $V$ not in the following table is a generically free $\rho(\tilde{G})$-action.

\begin{center}\label{tab1}
		\captionof{table}{: Non-generically free $\rho(\tilde{G})$-representation \label{table01} }
	
	\footnotesize
	\begin{tabular}{|c | l ||   l |}
		\hline
		\rule{0pt}{2.9ex}  No. &
		$\tilde{G}$ &  V \\ [0.5ex] 
		\hline
		
		\rule{0pt}{2.9ex}    1 &  $\gSpin(n)$,\,\, $3\leq n\leq 16,\,\, n\neq 4,15$ &  $V(n)^{*}$ \\[0.5ex] 
		\hline
		
		\rule{0pt}{2.9ex}   2& $\gSpin(3)\times\gSpin(n)$, $n=3,5,6,7,9,11$  &  $V(3)\tens V(n)^{*}$  \\[0.5ex] 
		
		\hline
		
		\rule{0pt}{2.9ex}   3& $\gSpin(5)\times\gSpin(n)$, $n=5,6,7$ &  $V(5)\tens V(n)^{*}$\\ [0.5ex] 
		\hline
		
		\rule{0pt}{2.9ex}   4 & $\gSpin(6)\times\gSpin(n)$,\,\, $n=6,7,10$ &  $V(6)^{\pm}\tens V(n)^{*}$\\ [0.5ex] 
		\hline
		
		\rule{0pt}{2.9ex}   5 & $\gSpin(3)\times \gSpin(3)\times\gSpin(n)$, \, $n=3,5,6,7$&  $V(3)\tens V(3)\tens V(n)^{*}$\\ [0.5ex] 
		\hline
		
		\rule{0pt}{2.9ex}   6 & $\gSpin(3)\times \gSpin(6)\times\gSpin(n)$, $n=5,6$ &  $V(3)\otimes V(6)^{\pm}\otimes V(n)^{*}$  \\ [0.5ex] 
		\hline
		
		\rule{0pt}{2.9ex}    7 &  $\gSpin(3)^{4}$ &  $V(3)^{\tens 4}$ \\[0.5ex] 
		\hline
		\hline
		
		\rule{0pt}{2.9ex}    8 &  $\gSpin(n)$, \,\,\small$n=2k\geq 6$ & $W(n)$   \\[0.5ex] 
		\hline

		\rule{0pt}{2.9ex}    9 &  $\gSpin(n)\times\gSpin(n)$,\,\, $n=2k\geq 6$ &  $W(n)\tens W(n)$  \\[0.5ex] 
		\hline

		\rule{0pt}{2.9ex}    10 &  $\gSpin(n)\times \tilde{G}$,\,\, $n\neq 4$,\,\, $n=2k>\dim(V)+1$ & $W(n)\tens V$     \\[0.5ex]
		\hline

	\end{tabular}
\end{center}

\medskip

Let us shortly discuss the list on Table \ref{table01}. If $V$ consists of only (half)-spin representations, the list of groups (No. $1\sim7$) immediately follows from \cite[Table 1]{Ela1}, \cite[Theorems 4, 5, 6, 9, Table 6]{Ela2}, \cite[Theorem 1]{Popov1}, and \cite[Theorem 1, Table 0]{Popov2}. Otherwise, $V$ contains at least one vector representation. The group (No.~$8$) is listed in the table as the generic stabilizer of the vector representation of the special orthogonal group $\gO^{+}(n)$ is $\gO^{+}(n-1)$. In the case where $m\geq 2$, the groups $\bar{G}$ (No.~$9\sim10$) is obtained from \cite[Theorems 5, 7, Table 6]{Ela2}, \cite[Theorem 1]{Popov2}.

The character group $Z(\tilde{G})^{*}$ is an abelian group
with components
\begin{equation*}
Z\big(\gSpin(n_{i})\big)^{*}=\begin{cases}
\Z/2\Z & \text{ if } 2\nmid n_{i},\\
\Z/4\Z & \text{ if } 2\,|\, n_{i} \text{ and }4\nmid n_{i},\\
\Z/2\Z\oplus\Z/2\Z & \text{ if } 4\,|\, n_{i}.
\end{cases}
\end{equation*}
A character $\chi$ of $Z(\tilde{G})$ will be denoted by $\chi=(\chi_{1},\ldots, \chi_{m})$, where $\chi_{i}\in Z\big(\gSpin(n_{i})\big)^{*}$. By \cite[Section 4]{Merkurjev2}, for any nontrivial $\chi_{i}$ of $Z\big(\gSpin(n_{i})\big)^{*}$ we have
\begin{equation}\label{nchidimension}
n(\chi_{i})=\begin{cases} 2^{(n_{i}-1)/2} & \text{ if } 2\nmid n_{i} \text{ and } \chi_{i}=1,\\
2^{(n_{i}-2)/2} & \text{ if } 2\,|\, n_{i} \text{ and } \chi_{i}\neq (1,1),\,2,\\
2^{k_{i}} & \text{ if }  2\,|\, n_{i} \text{ and } \chi_{i}=(1,1) \text{ or }2,
\end{cases}
\end{equation}
where $2^{k_{i}}$ is the maximal power of $2$ dividing $n_{i}$.

Let $G=\tilde{G}/\mu$, where $\mu$ is a central subgroup of $\tilde{G}$. For $\chi=(\chi_{1},\ldots, \chi_{m})\in Z(G)^{*}$, regarded as an element of $Z(\tilde{G})^{*}$, we set $V(\chi_{i})$ equal to the trivial representation for any trivial $\chi_{i}$ and set
\begin{equation}\label{setvchi}
 V(\chi_{i})\!=\!\begin{cases}
V(n_{i}) &\text{if } 2\nmid n_{i} \text{ and } \chi_{i}=1,\\
V(n_{i})^{\pm} &\text{if } 2\,|\, n_{i} \text{ and } \chi_{i}\neq (1,1),\, 2,\\
W(n_{i}) &\text{if } 2\,|\, n_{i} \text{ and } \chi_{i}=(1,1)\text{ or }2.
\end{cases}
\end{equation} 
Consider the following tensor product representation of $G$.
\begin{equation}\label{tensor:typeBD}
\rho_{\chi}:G\to \GL(V_{\chi}), \text{ where } V_{\chi}=\bigotimes_{i=1}^{m}V(\chi_{i}).
\end{equation}
Then, we have $V_{\chi}\in \operatorname{Rep}^{\chi}(G)$ as $V(\chi_{i})\in \operatorname{Rep}^{\chi_{i}}(\gSpin(n_{i}))$ for all $1\leq i\leq m$.

Now we apply Theorem \ref{thm:main1} and Corollary \ref{cor:cor1} to semisimple groups of type $B$ and $D$ as follows.

\begin{proposition}\label{prop:typeBD}
Let $G=\big(\prod_{i\in [m]}\gSpin(n_{i})\big)/\mu$ be a reduced group and let $G_{\red}=\big(\prod_{i\in [m]}\gGSpin(n_{i})\big)/\mu$, where $n_{i}\geq 3$ and $\mu$ is a central subgroup. Let $B$ be a subset of $Z(G)^{*}$ whose image $\bar{B}$ is a basis of $Z(G)^{*}/2Z(G)^{*}$ with dimension $|B|$. Assume that there is a subset $B_0$ of $B$ such that $V_{\chi}\neq V$ for all $\chi\in B_0$ and all $V$ in Table $\ref{table01}$ and $\bigcup_{\chi\in B_0}\operatorname{supp}(\chi)=[m]$. Then,
 \begin{equation}\label{prop1:equation}
\ed(G)\leq \sum_{\chi\in B}\dim V_{\chi} - \sum_{i\in [m]}n_{i}(n_{i}-1)/2
\end{equation}
and 
\begin{equation}\label{prop1:equation2}
\ed(G_{\red})\leq \sum_{\chi\in B}\dim V_{\chi} - \sum_{i\in [m]}n_{i}(n_{i}-1)/2-|B|.
\end{equation}

Assume in addition that $\bar{B}$ is index-minimal and each $\chi\in B$ has no component equal to $2$ and has the $i$th component equal to $(1,1)$ only if $n_{i}$ is a power of $2$. Then, the equalities in $($\ref{prop1:equation}$)$ and $($\ref{prop1:equation2}$)$ hold.
\end{proposition}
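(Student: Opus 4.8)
The statement has two parts: the upper bounds \eqref{prop1:equation}, \eqref{prop1:equation2}, and the claim that under the extra hypotheses these are equalities. The upper bounds follow immediately from Theorem \ref{thm:main1}: by construction, each $\rho_\chi : G \to \GL(V_\chi)$ of \eqref{tensor:typeBD} lies in $\operatorname{Rep}^\chi(G)$, and the hypothesis ``$V_\chi \neq V$ for all $\chi \in B_0$ and all $V$ in Table \ref{table01}'' is exactly the condition that the generic stabilizer of $V_\chi$ in $\rho_\chi(\tilde G)$ is trivial --- here I should note, as the paragraph after Table \ref{table01} promises, that for a homogeneous Dynkin type the generic stabilizer of $V_\chi$ is trivial iff that of $\P(V_\chi)$ is, so both hypotheses (i) and (ii) of Theorem \ref{thm:main1} are met by the same subset $B_0$. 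Combined with $\bigcup_{\chi \in B_0}\operatorname{supp}(\chi) = [m]$, Theorem \ref{thm:main1}(i) gives $\ed(G) \le \sum_{\chi\in B}\dim V_\chi - \dim G$, and $\dim G = \dim\tilde G = \sum_{i\in[m]} n_i(n_i-1)/2$ since $\dim\gSpin(n_i) = \binom{n_i}{2}$. Part (ii) of Theorem \ref{thm:main1} together with $\rank Z(G) = |B|$ (which holds because $\bar B$ is a basis of $Z(G)^*/2Z(G)^*$ of cardinality $|B|$, so $|B| = \dim_{\F_2} Z(G)^*/2Z(G)^* = \rank Z(G)$, using that $Z(G)$ is a $2$-group) gives \eqref{prop1:equation2}.

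For the equalities I would invoke Corollary \ref{cor:cor1}, so the work is to verify its hypotheses (i) and (ii) for $p=2$. Hypothesis (i): the $2$-socle $C$ of $Z(G)$ must be $(\gmu_2)^r$. Since $Z(G)$ is a $2$-group (it is a subgroup of $Z(\tilde G)$, a product of $\Z/2$, $\Z/4$, $\Z/2\oplus\Z/2$), its socle is an elementary abelian $2$-group, hence $(\gmu_2)^r$ with $r = \rank(Z(G)[2])$; I should confirm $r \ge 1$, i.e. $G$ is not trivial, which is part of being reduced with $m \ge 1$. Hypothesis (ii) has two clauses: $\bar B$ is an index-minimal basis of $C^*$, and $\dim V_\chi = n(\bar\chi)$ for all $\chi \in B$. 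The first clause is part of the added assumptions (``$\bar B$ is index-minimal''), but I need to check that $\bar B$, defined as the image of $B$ in $Z(G)^*/2Z(G)^*$, is the same as the image in $C^*$ under restriction $Z(G)^* \to C^*$; these agree because restriction to the $2$-socle induces an isomorphism $Z(G)^*/2Z(G)^* \iso C^*$ for a finite abelian $2$-group (Pontryagin duality: $C = Z(G)[2]$ corresponds to $Z(G)^*/2Z(G)^*$). So $\bar B$ is automatically a basis of $C^*$, and index-minimality is the hypothesis.

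The main obstacle --- and the step deserving genuine care --- is the second clause of hypothesis (ii): $\dim V_\chi = n_G(\bar\chi)$ for each $\chi \in B$. Here $\dim V_\chi = \prod_{i=1}^m \dim V(\chi_i)$ by \eqref{tensor:typeBD}, with the factors read off from \eqref{setvchi}, and $n_G(\bar\chi)$ for a semisimple $G$ is multiplicative over the simple components when the $\chi_i$ have disjoint... no: in general $n_G(\chi) = \prod_{i} n_{\gSpin(n_i)}(\chi_i)$ is \emph{not} automatic, but for $G = \tilde G/\mu$ one has $\operatorname{Rep}^\chi(G) = \operatorname{Rep}^\chi(\tilde G)$ and by \cite[Section 4]{Merkurjev2} (or the argument via \eqref{eq:ngchipartial}) $n_G(\chi) = \prod_i n(\chi_i)$; I would cite this. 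Then it remains to match, component by component, $\dim V(\chi_i)$ from \eqref{setvchi} against $n(\chi_i)$ from \eqref{nchidimension}. The three cases of \eqref{setvchi} line up with the three cases of \eqref{nchidimension} \emph{except} that \eqref{nchidimension} gives $2^{k_i}$ (the $2$-adic valuation of $n_i$) in the case $\chi_i = (1,1)$ or $2$, whereas \eqref{setvchi} assigns $W(n_i)$ of dimension $n_i$ in that case. These agree precisely when $n_i = 2^{k_i}$, i.e. $n_i$ is a power of $2$ --- and the excluded cases are handled by the hypotheses ``each $\chi\in B$ has no component equal to $2$'' (killing the $\chi_i = 2$ possibility entirely) and ``has $i$th component equal to $(1,1)$ only if $n_i$ is a power of $2$'' (forcing $n_i = 2^{k_i}$ whenever the third case of \eqref{setvchi} occurs). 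So for every $\chi \in B$ and every $i$, $\dim V(\chi_i) = n(\chi_i)$, hence $\dim V_\chi = n_G(\bar\chi)$, and Corollary \ref{cor:cor1} applies, yielding the equality \eqref{cor1:equation} which is \eqref{prop1:equation}. Finally, since $Z(G)$ is a $2$-group, the stronger form of Corollary \ref{cor:cor1} gives $\ed(G_{\red}) = \ed(G) - \rank Z(G) = \ed(G) - |B|$, which combined with \eqref{prop1:equation} is \eqref{prop1:equation2}. The bookkeeping of matching \eqref{setvchi} with \eqref{nchidimension} under the stated exclusions is the crux; everything else is assembling already-proved results.
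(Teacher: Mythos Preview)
Your proposal is correct and follows essentially the same route as the paper: apply Theorem~\ref{thm:main1} for the upper bounds (using that Table~\ref{table01} classifies the non-generically-free representations and that for these types generic freeness of $V_\chi$ and of $\P(V_\chi)$ coincide), then verify the hypotheses of Corollary~\ref{cor:cor1} by matching \eqref{setvchi} against \eqref{nchidimension} componentwise under the stated exclusions on $\chi_i$. The paper is terser---it invokes Nakayama's lemma once to conclude $B$ is a minimal generating set and writes $n(\chi_i)=n(\bar\chi_i)=\dim V(\chi_i)$ directly---but the substance is identical.
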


\begin{proof}
Let $B\subset Z(G)^{*}$ such that the image $\bar{B}$ is a basis of the $2$-socle of $Z(G)^{*}$ with dimension $|B|$. Then, by Nakayama's lemma, $B$ is a generating set of $Z(G)^{*}$ of minimal cardinality. Since $V_{\chi}\neq V$ for all $\chi\in B_0$ and all representations $V$ in Table $\ref{table01}$, each $V_{\chi}$ is a generically free $\rho_{\chi}(G)$-representation, thus the upper bounds in (\ref{prop1:equation}) and (\ref{prop1:equation2}) immediately follow from Theorem \ref{thm:main1}.

If $\chi=(\chi_{1},\ldots, \chi_{m})\in Z(G)^{*}$ has no component equal to $2$ and has the $i$th component equal to $(1,1)$ only if $n_{i}$ is a power of $2$, then by (\ref{nchidimension}), (\ref{setvchi}) we have
\[n(\chi_{i})=n(\bar{\chi}_{i})=\dim V(\chi_{i}),\]
where $\bar{\chi}_{i}$ denotes the restriction of $\chi_{i}$ on the $2$-socle of $\gSpin(n_{i})$. Hence, $n(\bar{\chi})=\prod_{i=1}^{m}n(\chi_{i})=\dim V_{\chi}$. As the image $\bar{B}$ is index-minimal, the equalities in (\ref{prop1:equation}) and (\ref{prop1:equation2}) follow by Corollary \ref{cor:cor1}.\end{proof}

In the special case $m=1$ and $\mu$ is trivial, Proposition \ref{prop:typeBD} immediately shows that
\begin{equation*}
	\ed(\gGSpin(n))=\begin{cases}
		\ed(\gSpin(n))-2 & \text{ if } n\equiv 0 \mod 4,\\
		\ed(\gSpin(n))-1 & \text{ otherwise}
	\end{cases}
\end{equation*}
with the exact value of $\ed(\gSpin(n))$ for any $n\geq 15$. This recovers the results for the computation of $\ed(\gGSpin(n))$ in \cite[Proposition 6.1, Theorem 7.1]{CM} without using the theory of quadratic forms.

Furthermore, Proposition \ref{prop:typeBD} recovers \cite[Theorem 2.4]{edtypeB}. Indeed, if $n_{i}\geq 7$ for all $1\leq i\leq m$ and $G$ has no simple direct factors, then the upper bound (\ref{prop1:equation}) holds without verifying the assumption that $V_{\chi}\neq V$ for all $\chi\in B$ and all $V$ in Table $\ref{table01}$.

\begin{example} Let $G=\big(\prod_{i\in [n]}\gSpin(4a_i+2)\times \prod_{i\in [m]}\gSpin(2b_{i}+1)\big)/\mu$, where $a_i\geq 2$, $b_{i}\geq 1$, $n+m\geq 3$, and $\mu=\{(a_1,\ldots,a_{n+m})\in(\gmu_4)^n\times (\gmu_2)^m\, |\, a_1\cdots a_{n+m}=1\}$. Then, we have
\begin{equation*}
\ed(G)=\prod_{i\in [n]} 4^{a_{i}} \cdot \prod_{i\in [m]} 2^{b_{i}}-\dim G\,\, \text{ and }\,\, \ed(G_{\red})=\ed(G)-1,
\end{equation*}
where $G_{\red}=\big(\prod_{i\in [n]}\gGSpin(4a_i+2)\times \prod_{i\in [m]}\gGSpin(2b_{i}+1)\big)/\mu$.
\end{example}

In the following example, we apply Corollary \ref{cor2} to compute the essential dimension of a semisimple group of type $B$ and $D$, which is not directly covered by Proposition \ref{prop:typeBD}:

\begin{example}
Let $H=\big(\gSpin(4a+2)\times\gSpin(4b+2)\big)/\langle (-1,-1)\rangle$, where $a, b\geq 2$. Consider  $G=\big(\gSpin(4a+2)\times\gSpin(4b+2)\big)/\langle (i,-i)\rangle$, where $i$ denotes a primitive $4$th root of $1$, i.e., $G=H/\mu$ for a central subgroup $\mu:=\langle (i, -i)\rangle/\langle (-1,-1)\rangle$ of $H$. Then, the character groups $Z(G)^{*}=\Z/4\Z$ and $C^{*}=\Z/2\Z$ are generated by $\chi=(1,1)$ and $\bar{\chi}$, respectively, thus by Proposition \ref{prop:typeBD} we have
\begin{equation*}
\ed(G)=4^{a+b}-(2a+1)(4a+1)-(2b+1)(4b+1) \,\, \text{ and }\,\, \ed(G_{\red})=\ed(G)-1,
\end{equation*}
where $G_{\red}=\big(\gGSpin(4a+2)\times\gGSpin(4b+2)\big)/\langle (i,-i)\rangle$.

On the other hand, the character group $Z(H)^{*}\simeq \Z/4\Z\oplus \Z/2\Z$ is generated by $\chi$ and $\lambda=(2,0)$.  Then, the character group $D^{*}$ of the $2$-socle $D$ of $H$ is generated by $\bar{\chi}$ and $\bar{\lambda}$. Since $n(\bar{\chi}+\bar{\lambda})=n(\bar{\chi})$ and $n(\bar{\lambda})=2$, it is an index-minimal basis of $D^{*}$. Therefore, by Corollary \ref{cor2} we obtain
\begin{equation*}
\ed(H)=\ed(G)+2 \,\,\text{ and }\,\, \ed(H_{\red})=\ed(G),
\end{equation*}
where $H_{\red}=\big(\gGSpin(4a+2)\times\gGSpin(4b+2)\big)/\langle (-1,-1)\rangle$.
\end{example}

\subsection{Groups of type $C$}

Let $\tilde{G}=\prod_{i\in [m]}\gSp(2n_{i})$, where $n_{1}\geq \cdots \geq n_{m}\geq 3$ and $m\geq 1$. The character group $Z(\tilde{G})^{*}$ is an elementary abelian $2$-group of rank $m$. Let $\rho$ and $\bar{\rho}$ denote the representation as in (\ref{eq:bdtensorrep}) with $V=\bigotimes_{i=1}^{m} W(2n_{i})$ and the projective representation. Then, by \cite[p.~233]{Popov2} and \cite{Ela2} $V$ (resp. $\P(V)$) is a generically free $\rho(\tilde{G})$ (resp. $\bar{\rho}(\tilde{G})$)-representation if and only if
\begin{equation}\label{genericTypeC}
m\geq 3\, \text{ and }\, 2n_{1}\leq \prod_{i=2}^{m}2n_{i}.
\end{equation}

We calculate below the essential dimension of a semisimple group of type $C$ under the assumption that each $n_{i}$ is a $2$-power and the basis $B$ is index-minimal. We remark that the upper bound in (\ref{eq:edtypeC}) still holds without this assumption.

\begin{proposition}\label{prop:typeC}
Let $G=\big(\prod_{i\in [m]}\gSp(2n_{i})\big)/\mu$ be a reduced group and let $G_{\red}=\big(\prod_{i\in [m]}\gGSp(2n_{i}))\big)/\mu$, where $n_{i}$ is a $2$-power for all $i\in [m]$, $n_{1}\geq \cdots \geq n_{m}\geq 3$, $m\geq 3$, and $\mu$ is a central subgroup. Let $B$ denote an index-minimal basis of $Z(G)^{*}$ and let $\chi_{j}$ denote the first nonzero component of $\chi\in B$. Assume that there is a subset $B_0$ of $B$ such that $\bigcup_{\chi\in B_0}\operatorname{supp}(\chi)=[m]$ and 
\begin{equation*}
	|\operatorname{supp}(\chi)|\geq 3\,\, \text { with }\,\, 2n_{j}\leq \prod_{\chi_{i}\neq 0,\, \chi_{j}}2n_{i}
\end{equation*}
for all $\chi\in B_0$. Then,
\begin{equation}\label{eq:edtypeC}
	\ed(G)= \sum_{\chi\in B}\big(\prod_{\chi_{i}\neq 0} 2n_{i}\big)- \sum_{i=1}^{m}n_{i}(2n_{i}+1) \, \text{ and }\, \ed(G_{\red})=\ed(G)-|B|.
\end{equation}
\end{proposition}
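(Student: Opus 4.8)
The plan is to apply Theorem \ref{thm:main1} and Corollary \ref{cor:cor1} directly, after verifying their hypotheses for the tensor-product representations $\rho_\chi$ of type $C$. First I would fix, for each $\chi \in B$ with first nonzero component $\chi_j$, the representation $V_\chi = \bigotimes_{\chi_i \neq 0} W(2n_i)$ (the trivial representation on the factors where $\chi_i = 0$), which lies in $\operatorname{Rep}^\chi(G)$ since each vector representation $W(2n_i)$ lies in $\operatorname{Rep}^{\chi_i}(\gSp(2n_i))$ and the center of $\gSp(2n_i)$ is $\gmu_2$. The hypothesis $|\operatorname{supp}(\chi)| \geq 3$ and $2n_j \leq \prod_{\chi_i \neq 0,\, \chi_j} 2n_i$ for $\chi \in B_0$ is precisely condition (\ref{genericTypeC}) applied to the subproduct $\prod_{\chi_i \neq 0} \gSp(2n_i)$, so $V_\chi$ is a generically free representation of this subproduct, and hence $\P(V_\chi)$ is a generically free representation of its image in $\PGL(V_\chi)$; in particular the generic stabilizers of both $V_\chi$ and $\P(V_\chi)$ in the image of $G$ are trivial. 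Together with $\bigcup_{\chi \in B_0}\operatorname{supp}(\chi) = [m]$, this gives both hypotheses (i) and (ii) of Theorem \ref{thm:main1}, yielding the upper bounds
\begin{equation*}
\ed(G) \leq \sum_{\chi \in B}\Big(\prod_{\chi_i \neq 0} 2n_i\Big) - \sum_{i=1}^m n_i(2n_i+1)
\end{equation*}
and $\ed(G_{\red}) \leq \ed(G) - |B|$, using $\dim \gSp(2n_i) = n_i(2n_i+1)$ and $\rank Z(G) = |B|$ (since $B$ is a basis of the elementary abelian $2$-group $Z(G)^*$).

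Next I would establish the matching lower bound via Corollary \ref{cor:cor1} with $p = 2$. Since $Z(\tilde G)^*$ is an elementary abelian $2$-group, the $2$-socle $C$ of $Z(G)$ equals $Z(G)$ itself, which is isomorphic to $(\gmu_2)^r$ with $r = |B|$, so hypothesis (i) of Corollary \ref{cor:cor1} holds and in fact $Z(G)$ is a $2$-group, giving access to the stronger conclusion (\ref{cor1:equation2}). For hypothesis (ii) I need $\bar B = B$ to be an index-minimal basis of $C^* = Z(G)^*$ with $\dim V_\chi = n(\chi)$ for all $\chi \in B$; the index-minimality is assumed, and the dimension equality is the key remaining computation. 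By \cite[Section 4]{Merkurjev2}, for a nontrivial character $\chi_i$ of $Z(\gSp(2n_i)) = \gmu_2$ one has $n_{\gSp(2n_i)}(\chi_i) = 2n_i$ exactly when $n_i$ is a $2$-power — this is where the $2$-power hypothesis on each $n_i$ enters. Since $n(\chi)$ for a semisimple group is multiplicative over components with $\chi_i \neq 0$ (each component contributing a representation, tensored together), we get $n(\chi) = \prod_{\chi_i \neq 0} n_{\gSp(2n_i)}(\chi_i) = \prod_{\chi_i \neq 0} 2n_i = \dim V_\chi$. With both hypotheses of Corollary \ref{cor:cor1} verified, (\ref{cor1:equation}) and (\ref{cor1:equation2}) give the reverse inequalities, proving the two equalities in (\ref{eq:edtypeC}).

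The main obstacle — and the step requiring the most care — is the computation $n_{\gSp(2n_i)}(\chi_i) = 2n_i$ for the nontrivial character of $\gmu_2 = Z(\gSp(2n_i))$ when $n_i$ is a $2$-power, and the accompanying multiplicativity of $n(\chi)$ over the support components. One must argue that the minimal dimension over $\operatorname{Rep}^{\chi_i}(\gSp(2n_i))$ is exactly $2n_i$: the vector representation achieves $2n_i$, and any representation on which the center acts by $\chi_i$ must have dimension divisible by $2n_i$ when $n_i$ is a $2$-power, using the structure of the representation ring of $\gSp(2n_i)$ (the relevant weights all have even dimension, and the $2$-adic valuation forces divisibility by $2n_i$ precisely under the $2$-power hypothesis). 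For the multiplicativity, any $V \in \operatorname{Rep}^\chi(\prod \gSp(2n_i))$ decomposes into irreducibles, each of which is an external tensor product of irreducibles of the factors with matching central characters, so $\dim V$ is a nonnegative combination of products $\prod n_{\gSp(2n_i)}(\chi_i)$, whence $n(\chi) = \prod_{\chi_i \neq 0} n_{\gSp(2n_i)}(\chi_i)$. Once these two facts are in hand, the remaining bookkeeping — matching $\sum_{\chi \in B} \dim V_\chi$, $\dim G = \sum n_i(2n_i+1)$, $\rank Z(G) = |B|$, and invoking Proposition \ref{prop:twosre} for independence of the envelope — is routine.
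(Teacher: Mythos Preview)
Your proposal is correct and follows essentially the same route as the paper's proof: define $V_\chi=\bigotimes_{\chi_i\neq 0}W(2n_i)$, invoke condition (\ref{genericTypeC}) to get trivial generic stabilizers for both $V_\chi$ and $\P(V_\chi)$ for $\chi\in B_0$, apply Theorem \ref{thm:main1} for the upper bounds, and then use $n(\chi_i)=2n_i$ (from \cite[Section 4]{Merkurjev2}, valid because each $n_i$ is a $2$-power) together with Corollary \ref{cor:cor1} for the matching lower bounds. Your write-up is simply more explicit about the bookkeeping (that $C=Z(G)$ since $Z(G)$ is already $2$-elementary, the multiplicativity $n(\chi)=\prod_{\chi_i\neq 0}n(\chi_i)$, and $\rank Z(G)=|B|$), but the argument is the same.
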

\begin{proof}
Consider the representation $\rho$ as in (\ref{tensor:typeBD}), replacing (\ref{setvchi}) by $V(\chi_{i})=W(2n_{i})$ for all nontrivial $\chi_{i}$. Then, $V_{\chi}\in \operatorname{Rep}^{\chi}(G)$. By our assumption on $\chi$ and (\ref{genericTypeC}), the generic stabilizer of $V_\chi$ (resp. $\P(V_\chi)$) in $\rho_{\chi}(G)$ (resp. $\bar{\rho}_{\chi}(G)$) is trivial for each $\chi\in B_0$, thus by Theorem \ref{thm:main1} the upper bounds in (\ref{eq:edtypeC}) follow. 

Now we assume that $n_{i}$ is a $2$-power for all $i\in [m]$. Then by \cite[Section 4]{Merkurjev2}, $n(\chi_{i})=2n_{i}$ for  $\chi_{i}\neq 0$, thus $n(\chi)=\prod_{\chi_{i}\neq 0} 2n_{i}$. Hence, the equalities in (\ref{eq:edtypeC}) follow by Corollary \ref{cor:cor1}.
\end{proof}

In particular, if $Z(G)^{*}$ is generated by a single element $\chi$ with $\chi_{i}=1$, then 
\begin{equation*}
	\ed(G)= 2^{k_{1}+\cdots +k_{m}}-\sum_{i=1}^{m}2^{k_{i}-1}(2^{k_{i}}+1)\,\, \text{ and }\,\, \ed(G_{\red})=\ed(G)-1,
\end{equation*}
where $k_{1}\leq k_{2}+\cdots +k_{m}$.

\subsection{Groups of type $A$}

Let $\tilde{G}=\prod_{i\in [m]}\gSL(p^{n_{i}})$, where $p$ is a prime integer, $n_{1}\geq \cdots \geq n_{m}\geq 1$, and $m\geq 1$. The character group $Z(\tilde{G})^{*}$ is an abelian $p$-group with components $Z\big(\gSL(p^{n_{i}})\big)^*=\Z/p^{n_{i}}\Z$. Let $\rho$ and $\bar{\rho}$ denote the representation as in (\ref{eq:bdtensorrep}) with $V=\bigotimes_{i=1}^{m} W(p^{n_{i}})$ and the projective representation. Then, by \cite[Theorem 2]{Popov2} and \cite{Ela2}, $V$ (resp. $\P(V))$ is a generically free $\rho(\tilde{G})$ (resp. $\bar{\rho}(\tilde{G})$)-representation if and only if 
\begin{equation}\label{genericTypeA}
	m\geq 3,\,   n_{1}< \sum_{i=2}^{m}n_{i},\, \text{ and }\, \tilde{G}\neq \gSL(2)^{4},\,\, \gSL(3)^{3},\,\, \gSL(p^{n_{1}})^{2}\times \gSL(2).
\end{equation}

In \cite{CR}, the following result was proved for the strict reductive envelope $G_{\red}$ of $G$. Here, we extend this result to the semisimple part $G$.

\begin{proposition}\label{prop:typeA}\cite[Theorem 1.2]{CR}
	Let $G=\big(\prod_{i\in [m]}\gSL(p^{n_{i}})\big)/\mu$ be a reduced group and let $G_{\red}=\big(\prod_{i\in [m]}\gGL(p^{n_{i}})\big)/\mu$, where $n_{1}\geq \cdots \geq n_{m}\geq 1$, $m\geq 3$, and $\mu$ is a central subgroup. Let $B$ denote a subset of $Z(G)^{*}$ such that the image $\bar{B}$ is an index-minimal basis of $Z(G)^{*}/pZ(G)^{*}$ with dimension $|B|$ and each $\chi\in B$ has components $\chi_i$ equal to one of numbers $0,1,p^{n_i}-1$. Let $\chi_{j}$ denote the first nonzero component of $\chi\in B$. Assume that there is a subset $B_0$ of $B$ such that for all $\chi\in B_0$, $|\operatorname{supp}(\chi)|\geq 3$ with
	\begin{equation*}
	n_{j}< \sum_{\chi_i\neq 0,\, i\neq j}n_{i}\,\, \text{ and }\,\, \{ p^{n_{i}}\,|\, \chi_{i}\neq 0\}\neq \{2,2,2,2\}, \{3,3,3\}, \{2^{n_{j}}, 2^{n_{j}}, 2\}	
	\end{equation*}
	as multisets and $\bigcup_{\chi\in B_0}\operatorname{supp}(\chi)=[m]$. Then,
	\begin{equation}\label{eq:typeAed}
		\ed(G)=\sum_{\chi\in B}(\prod_{\chi_{i}\neq 0} p^{n_{i}})- \sum_{i=1}^{m}(p^{2n_{i}}-1) \text{ and } \ed(G_{\red})=\ed(G)-|B|.
	\end{equation}
\end{proposition}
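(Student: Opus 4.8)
The plan is to mimic the structure of the proofs of Propositions~\ref{prop:typeBD} and~\ref{prop:typeC} and reduce everything to Theorem~\ref{thm:main1} for the upper bounds and to Corollary~\ref{cor:cor1} for the matching lower bounds. First I would set up the tensor product representations: for each $\chi=(\chi_{1},\ldots,\chi_{m})\in B$, define $V(\chi_{i})$ to be the trivial representation if $\chi_{i}=0$, the vector representation $W(p^{n_{i}})$ of $\gSL(p^{n_{i}})$ if $\chi_{i}=1$, and its dual $W(p^{n_{i}})^{*}$ if $\chi_{i}=p^{n_{i}}-1$ (this is why the hypothesis restricts $\chi_{i}$ to the three values $0,1,p^{n_{i}}-1$). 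Then $V(\chi_{i})\in \operatorname{Rep}^{\chi_{i}}(\gSL(p^{n_{i}}))$ in each case, so that $V_{\chi}=\bigotimes_{i=1}^{m}V(\chi_{i})$ lies in $\operatorname{Rep}^{\chi}(G)$, exactly as needed to feed into Theorem~\ref{thm:main1}. Note $\dim V(\chi_{i})=p^{n_{i}}$ whenever $\chi_{i}\neq 0$, so $\dim V_{\chi}=\prod_{\chi_{i}\neq 0}p^{n_{i}}$, and $\dim G=\dim\tilde{G}=\sum_{i}(p^{2n_{i}}-1)$, which is the bookkeeping underlying~(\ref{eq:typeAed}).

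Next, for the upper bound on $\ed(G)$, I would verify the trivial generic stabilizer hypothesis of Theorem~\ref{thm:main1}(i) for each $\chi\in B_{0}$. The action of $G$ on $V_{\chi}$ factors through $\prod_{\chi_{i}\neq 0}\gSL(p^{n_{i}})$ acting on $\bigotimes_{\chi_{i}\neq 0}W(p^{n_{i}})^{(\pm)}$; since tensoring a representation with its dual does not change the generic stabilizer, up to an automorphism of $\gSL(p^{n_{i}})$ this is the standard multi-linear action, so the classification~(\ref{genericTypeA}) of \cite{Popov2} and \cite{Ela2} applies verbatim. The stated conditions $|\operatorname{supp}(\chi)|\geq 3$, $n_{j}<\sum_{\chi_{i}\neq 0,\,i\neq j}n_{i}$, and the avoidance of the three exceptional multisets are precisely~(\ref{genericTypeA}) for the subfamily indexed by $\operatorname{supp}(\chi)$, so each $V_{\chi}$ with $\chi\in B_{0}$ is a generically free $\rho_{\chi}(G)$-representation. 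Combined with $\bigcup_{\chi\in B_{0}}\operatorname{supp}(\chi)=[m]$, Theorem~\ref{thm:main1}(i) yields $\ed(G)\leq\sum_{\chi\in B}\dim V_{\chi}-\dim G$, which is the right-hand side of~(\ref{eq:typeAed}). For $\ed(G_{\red})$, the same classification statement is given simultaneously for $\P(V)$ and $\bar{\rho}(\tilde{G})$ in \cite{Popov2}, so the projective generic stabilizers are trivial as well; Theorem~\ref{thm:main1}(ii) then gives $\ed(G_{\red})\leq\sum_{\chi\in B}\dim V_{\chi}-\dim G-\operatorname{rank}Z(G)$, and $\operatorname{rank}Z(G)=|\bar{B}|=|B|$.

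For the lower bounds and hence the equalities, I would invoke Corollary~\ref{cor:cor1} with the prime $p$. Its hypothesis~(i) holds because the $p$-socle $C$ of $Z(G)$ is $(\gmu_{p})^{r}$ with $r=|B|$ (the character group $Z(\tilde{G})^{*}$ is an abelian $p$-group, so $Z(G)^{*}$ is too, and its socle has rank $|\bar{B}|$). Hypothesis~(ii) is where I would use that $\bar{B}$ is index-minimal and that each $\chi_{i}\in\{0,1,p^{n_{i}}-1\}$: by \cite[Section~4]{Merkurjev2} one has $n(\chi_{i})=p^{n_{i}}$ when $\chi_{i}=\pm1$ (equivalently, a generator of $Z(\gSL(p^{n_{i}}))^{*}=\Z/p^{n_{i}}\Z$ or its inverse), and these characters restrict to generators of the $p$-socle, so $n(\bar{\chi}_{i})=p^{n_{i}}=\dim V(\chi_{i})$; multiplicativity of $n(-)$ over simple components gives $n(\bar{\chi})=\prod_{\chi_{i}\neq 0}p^{n_{i}}=\dim V_{\chi}$ for all $\chi\in B$. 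Then Corollary~\ref{cor:cor1} delivers both $\ed(G)=\sum_{\chi\in B}\dim V_{\chi}-\dim G$ and, since $Z(G)$ is a $p$-group, $\ed(G_{\red})=\ed(G)-\operatorname{rank}Z(G)=\ed(G)-|B|$; Proposition~\ref{prop:twosre} makes the latter independent of the chosen envelope. The main obstacle I expect is the careful translation of the hypotheses on $B_{0}$ into the hypotheses of~(\ref{genericTypeA}) after the dual-twist reduction: one must check that restricting attention to the components in $\operatorname{supp}(\chi)$, and renumbering so that the first nonzero component becomes the ``$n_{1}$'' of~(\ref{genericTypeA}), correctly matches the inequality $n_{j}<\sum_{\chi_{i}\neq 0,\,i\neq j}n_{i}$ and the exceptional-multiset conditions — and that the dual twist genuinely leaves the (projective) generic stabilizer unchanged, so that the classification is applicable despite $V(\chi_{i})$ sometimes being $W(p^{n_{i}})^{*}$ rather than $W(p^{n_{i}})$.
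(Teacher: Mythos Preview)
Your proposal is correct and follows essentially the same approach as the paper: the paper sets $V(\chi_{i})=\bigwedge^{\chi_{i}}\!\bigl(W(p^{n_{i}})\bigr)$ (which for $\chi_{i}=p^{n_{i}}-1$ is exactly your $W(p^{n_{i}})^{*}$), refers to the argument of Proposition~\ref{prop:typeC} and the classification~(\ref{genericTypeA}) to obtain the upper bounds via Theorem~\ref{thm:main1}, and then invokes \cite[Section~4]{Merkurjev2} for $n(\chi_{i})=p^{n_{i}}$ and concludes by Corollary~\ref{cor:cor1}. Your write-up is simply a more explicit unpacking of the same steps, including the observation that passing to the dual factor is harmless for the generic stabilizer (via the outer automorphism of $\gSL$), a point the paper leaves implicit.
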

\begin{proof}
	Given $\chi=(\chi_{1},\ldots, \chi_{m})\in B$, we set $V(\chi_{i})=\bigwedge^{\chi_{i}}\big(W(p^{n_{i}})\big)$ for $\chi_{i}=1$ or $p^{n_{i}}-1$. Then, the same argument as in the proof of Theorem \ref{prop:typeC} yields the upper bound in (\ref{eq:typeAed}). As every nontrivial $\chi_{i}$ is equal to either $1$ or $p^{n_{i}}-1$, by \cite[Section 4]{Merkurjev2}
	\begin{equation}
		n(\chi_{i})=p^{n_{i}}, \text{ thus } n(\chi)=\prod_{\chi_{i}\neq 0} p^{n_{i}}.
	\end{equation}
	Therefore, the equalities in (\ref{eq:typeAed}) hold by Corollary \ref{cor:cor1}.
\end{proof}

In particular, if $m\geq 5$ and $Z(G)^{*}$ is generated by a single element $\chi$ with $\chi_{i}=1$ or $p^{n_{i}}-1$, then 
\begin{equation*}
\ed(G)=p^{\sum_{i=1}^{m}n_{i}}- \sum_{i=1}^{m}(p^{2n_{i}}-1)\,\,\text{ and }\,\, \ed(G_{\red})=\ed(G)-1;
\end{equation*}
see \cite[Theorem 1.3(b)]{CR}.

\subsection{Groups of type $E_{6}$} 
Let $G=(\gE_{6})^{m}/\mu$ with $m\geq 2$, where $\gE_{6}$ denote the simply connected group of type $E_{6}$ and $\mu$ is a central subgroup. The character group $Z(\tilde{G})^{*}$ is an elementary abelian $3$-group of rank $m$. 

Let $W^{\pm}$ denote two $27$-dimensional minuscule representations of $\gE_{6}$. Consider the representation $\rho$ as in (\ref{tensor:typeBD}) and the projective representation $\bar{\rho}$, replacing (\ref{setvchi}) by $V(\chi_{i})=W^{\pm}$ for all nontrivial $\chi_{i}$. Then, by \cite[Theorem 3]{Ela2} and \cite[Theorem 1]{Popov2} $V_{\chi}$ (resp. $\P(V_{\chi})$) is generically free $\rho(G)$ (resp. $\bar{\rho}(G)$)-representation if and only if $|\operatorname{supp}(\chi)|\geq 2$. 
Since $n(\chi_{i})=\dim V(\chi_{i})=27$ for a nontrivial $\chi_{i}$ and $V_{\chi}\in \Rep^{\chi}(G)$, Theorem \ref{thm:main1} and Corollary \ref{cor:cor1} yield the following result.

\begin{proposition}\label{prop:typeE}
Let $G=(\gE_{6})^{m}/\mu$ be a reduced group with $m\geq 2$ and let $B$ be an index-minimal basis of $Z(G)^{*}$. If there is a subset $B_0$ of $B$ such that $|\operatorname{supp}(\chi)|\geq 2$ and
$\bigcup\operatorname{supp}(\chi)=[m]$ for all $\chi\in B_0$, then
\begin{equation*}
    \ed(G)=\sum_{\chi\in B}27^{|\operatorname{supp}(\chi)|}-78m\,\, \text{ and }\,\, \ed(G_{\red})=\ed(G)-|B|
\end{equation*}
for a strict reductive envelope $G_{\red}$ of $G$. 
\end{proposition}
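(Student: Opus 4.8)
The plan is to derive Proposition \ref{prop:typeE} as a direct instance of Theorem \ref{thm:main1} together with Corollary \ref{cor:cor1}, exactly as was done for the other classical types in Propositions \ref{prop:typeBD}, \ref{prop:typeC}, \ref{prop:typeA}. First I would fix the combinatorial setup: here $\tilde{G}=(\gE_{6})^{m}$, so $Z(\tilde{G})^{*}\simeq(\Z/3\Z)^{m}$ is already elementary abelian of rank $m$, hence $Z(G)^{*}$ is a quotient $3$-group and any index-minimal basis $B$ of $Z(G)^{*}$ (which exists by Nakayama's lemma since $Z(G)^{*}$ is a finite $p$-group, so a minimal generating set has cardinality $\rank Z(G)$ and reduces mod $3$ to a basis of the $3$-socle) can serve as the set $B$ in Theorem \ref{thm:main1}. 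For each $\chi=(\chi_{1},\dots,\chi_{m})\in B$ we choose the representation $\rho_{\chi}\colon G\to\GL(V_{\chi})$ with $V_{\chi}=\bigotimes_{i=1}^{m}V(\chi_{i})$, where $V(\chi_{i})$ is the trivial representation when $\chi_{i}=0$ and is one of the two $27$-dimensional minuscule representations $W^{\pm}$ when $\chi_{i}\neq 0$ (chosen so that $W^{\pm}\in\Rep^{\chi_{i}}(\gE_{6})$, which is possible because the two minuscule representations realize the two nontrivial characters of $Z(\gE_{6})=\gmu_{3}$). Then $V_{\chi}\in\Rep^{\chi}(G)$ and $\dim V_{\chi}=27^{|\operatorname{supp}(\chi)|}$.

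Next I would invoke the generic-freeness input. By \cite[Theorem 3]{Ela2} and \cite[Theorem 1]{Popov2}, the tensor product $V_{\chi}$ is a generically free $\rho_{\chi}(G)$-representation, and $\P(V_{\chi})$ is a generically free $\bar{\rho}_{\chi}(G)$-representation, precisely when $|\operatorname{supp}(\chi)|\geq 2$ (a single minuscule representation of $\gE_{6}$ has a positive-dimensional — indeed nontrivial finite up to the center, but at any rate non-generically-free — stabilizer, while a tensor of two or more becomes generically free). Since by hypothesis $|\operatorname{supp}(\chi)|\geq 2$ for every $\chi\in B_{0}$ and $\bigcup_{\chi\in B_{0}}\operatorname{supp}(\chi)=[m]$, both hypotheses (i) and (ii) of Theorem \ref{thm:main1} are satisfied. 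Applying part (i) gives
\begin{equation*}
\ed(G)\leq\sum_{\chi\in B}27^{|\operatorname{supp}(\chi)|}-\dim G=\sum_{\chi\in B}27^{|\operatorname{supp}(\chi)|}-78m,
\end{equation*}
using $\dim\gE_{6}=78$, and applying part (ii) together with Proposition \ref{prop:twosre} gives $\ed(G_{\red})\leq\sum_{\chi\in B}27^{|\operatorname{supp}(\chi)|}-78m-\rank Z(G)$, i.e. $\ed(G_{\red})\leq\ed(G)-|B|$ once the two bounds above are shown sharp and $|B|=\rank Z(G)$.

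For the matching lower bounds I would verify the hypotheses of Corollary \ref{cor:cor1} with $p=3$ (the unique torsion prime of $\gE_{6}$). Condition (i) holds because $Z(G)$ is a $3$-group, so its $3$-socle $C$ is an elementary abelian $3$-group $(\gmu_{3})^{r}$ with $r=\rank Z(G)\geq 1$. Condition (ii) requires that the reduced set $\bar{B}\subset C^{*}$ be an index-minimal basis with $\dim V_{\chi}=n(\bar{\chi})$ for all $\chi\in B$: the first is exactly the assumption that $B$ is an index-minimal basis of $Z(G)^{*}$ (which, $Z(G)^{*}$ being an elementary abelian $3$-group itself, coincides with $C^{*}$); the second follows from \cite[Section 4]{Merkurjev2}, which gives $n(\chi_{i})=27$ for each nontrivial component $\chi_{i}$ of $Z(\gE_{6})^{*}$, hence $n(\chi)=\prod_{\chi_{i}\neq 0}27=27^{|\operatorname{supp}(\chi)|}=\dim V_{\chi}$. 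Corollary \ref{cor:cor1} then yields the equality $\ed(G)=\sum_{\chi\in B}27^{|\operatorname{supp}(\chi)|}-78m$, and, since $Z(G)$ is a $p$-group, the stronger conclusion $\ed(G_{\red})=\ed(G)-\rank Z(G)=\ed(G)-|B|$ for any strict reductive envelope $G_{\red}$.

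Honestly, there is no serious obstacle here: the proposition is essentially a packaging of the general machinery, and the only points that need a line of care are (a) checking that $n(\chi)$ for a non-trivial character of $Z(\gE_{6})^{m}$ is the product $27^{|\operatorname{supp}(\chi)|}$ and that this equals $\dim V_{\chi}$ — both immediate from \cite[Section 4]{Merkurjev2} and the fact that $\gE_{6}$'s minuscule representations are $27$-dimensional — and (b) correctly citing \cite[Theorem 3]{Ela2}, \cite[Theorem 1]{Popov2} to conclude that $|\operatorname{supp}(\chi)|\geq 2$ is exactly the condition making the relevant tensor product (resp. its projectivization) generically free, with no exceptional small cases (unlike types $A$ and $C$, where exceptions like $\gSL(2)^{4}$ or $\gSp(6)^{?}$ had to be excluded). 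So the write-up is short: state the choice of $V_{\chi}$, cite the classification for generic freeness, apply Theorem \ref{thm:main1}(i),(ii), then apply Corollary \ref{cor:cor1} with $p=3$.
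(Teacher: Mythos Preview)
Your proposal is correct and follows exactly the paper's approach: choose $V_{\chi}=\bigotimes_{i}V(\chi_{i})$ with $V(\chi_{i})=W^{\pm}$ for the nontrivial components, cite \cite[Theorem~3]{Ela2} and \cite[Theorem~1]{Popov2} for generic freeness of $V_{\chi}$ and $\P(V_{\chi})$ when $|\operatorname{supp}(\chi)|\geq 2$, then apply Theorem~\ref{thm:main1} and Corollary~\ref{cor:cor1} with $p=3$ using $n(\chi_{i})=27$ from \cite[Section~4]{Merkurjev2}. One small slip to fix in the write-up: $3$ is not the \emph{unique} torsion prime of $\gE_{6}$ (both $2$ and $3$ are), but this is irrelevant to the argument, which only needs that $Z(G)$ is a $3$-group; also your parenthetical about the stabilizer of a single minuscule representation being simultaneously ``positive-dimensional'' and ``finite'' is garbled and should simply be dropped.
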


In particular, if $Z(G)^{*}$ is generated by a single element $\chi$ with $\chi_{i}=1$, then $\ed(G)=27^m-78m$ and $\ed(G_{\red})=\ed(G)-1$.

\end{document}